\renewcommand*{\backref}[1]{}
\renewcommand*{\backrefalt}[4]{({%
		\ifcase #1 Not cited.%
		\or On p.~#2%
		\else On pp.~#2%
		\fi%
	})} 
\crefname{subsection}{Subsection}{Subsection}
\crefname{equation}{Diagram}{Diagram}
\newcommand{\hyph}{\EuScript{H}}
\newcommand{\bbc}{\mathbb{C}}
\newcommand{\imp}{\Rightarrow}
\newcommand{\fa}{\forall}
\newcommand{\sfop}[1]{\operatorname{\mathsf{#1}}}
\newcommand{\angs}[1]{\langle #1\rangle}
\newcommand{\cMono}{\sfop{cMono}}
\renewcommand{\tt}{\sfop{t\!t}}
\newcommand{\ff}{\sfop{f\!f}}
\newcommand{\icat}{$\infty$-category\xspace}
\newcommand{\icats}{$\infty$-categories\xspace}
\newcommand{\id}{\sfop{id}}				
\newcommand{\Fun}{\sfop{Fun}}
\newcommand{\adj}{\dashv}
\newcommand{\Sub}{\mathsf{Sub}}
\newcommand{\Core}{\sfop{Core}}
\newcommand{\Ho}{\sfop{Ho}}
\newcommand{\op}{^\mathsf{op}}				
\newcommand{\lucite}[1]{\cite[#1]{lurie2009higher}}
\newcommand{\lcc}{locally Cartesian closed\xspace}
\newcommand{\ie}{i.e.\@\xspace}
\newcommand{\eg}{e.g.\@\xspace}
\newcommand{\SLat}{\sfop{SLat}}
\newcommand{\HSLat}{\sfop{HSLat}}
\newcommand{\emb}{\rightarrowtail}
\newcommand{\isContr}{\sfop{isContr}}
\newcommand{\C}{\EuScript{C}}
\renewcommand{\S}{\EuScript{S}}
\newcommand{\E}{\EuScript{E}}
\newcommand{\hide}[1]{}
\renewcommand{\ker}{\sfop{ker}}
\newcommand{\Map}{\sfop{Map}}
\newcommand{\slc}[2]{{#1}_{/#2}}
\newcommand{\Str}{\S^{\sf tr}}
\numberwithin{equation}{section}
\newtheorem{theorem}[equation]{Theorem}
\newtheorem{lemma}[equation]{Lemma}
\newtheorem{proposition}[equation]{Proposition}
\newtheorem{corollary}[equation]{Corollary}
\theoremstyle{definition}
\newtheorem{definition}[equation]{Definition}
\newtheorem{example}[equation]{Example}
\newtheorem{remark}[equation]{Remark}
\newtheoremstyle{TheoremNum}
{}{}              
{\itshape}                      
{}                              
{\bfseries}                     
{.}                             
{ }                             
{\thmname{#1}\thmnote{ \bfseries #3}}
\theoremstyle{TheoremNum}
\newtheorem{thmn}{Theorem}
\newtheorem{exm}{Example}
\title{Constructing Coproducts in Locally Cartesian Closed  $\infty$-Categories}
\author{Jonas Frey and Nima Rasekh}
\address{Department of Philosophy, Carnegie Mellon University, 5000 Forbes Avenue, Pittsburgh, PA 15213, USA
}
\email{jonasf@andrew.cmu.edu}
\address{{\'E}cole Polytechnique F{\'e}d{\'e}rale de Lausanne, SV BMI UPHESS, Station 8, CH-1015 Lausanne, Switzerland}
\email{nima.rasekh@epfl.ch}
\subjclass[2020]{18N60, 03G30, 18B25, 03B38}
\keywords{higher category theory, higher topos theory, homotopy type theory, coproducts, impredicative encodings}
\begin{document}
	
	\begin{abstract}
		We prove that every locally Cartesian closed  \icat with a subobject classifier
		has a \emph{strict initial object} and \emph{ disjoint and universal binary
			coproducts}.
	\end{abstract}
	
	\maketitle
	
	\section{Introduction}
	
	\subsection*{Elementary Toposes and Finite Colimits}
	
	{\it Categorical logic} uses results and constructions from category theory to
	study {\it type theory}, {\it set theory} and other concepts in mathematical
	logic. One key concept in categorical logic is that of an {\it elementary
		topos}. Elementary toposes admit a natural interpretation of {\it higher-order
		logic} \cite[Chapter~D4]{elephant2}, and also give rise to models of set
	theories~\cite{maclanemoerdijk1994topos,joyal1995algebraic}.  
	
	Elementary
	toposes were defined by Lawvere and Tierney as a generalization of
	\emph{Grothendieck toposes}. The latter always admit small limits and colimits
	since they are defined as categories of sheaves and are therefore locally
	presentable~\cite{Artin1972Seminaire}. Hence, the first definitions of elementary topos
	assumed the existence of both finite limits and finite colimits
	\cite{lawvere1970quantifiers,tierney1972sheaf}. However, it was soon realized
	that the existence of finite colimits could in fact be deduced from the other
	axioms and concretely that we have the following result: every finitely
	complete Cartesian closed category with a subobject classifier has finite
	colimits~\cite{mikkelsen1972finite,pare1974colimits,mikkelsen1976lattice}. 
	
	The recent decades have witnessed significant advances in the study of {\it
		homotopy invariant} mathematics. In particular, there is now a well developed
	theory of homotopy invariant categories, known as {\it $(\infty,1)$-categories}
	or simply {\it $\infty$-categories} \cite{bergner2010survey}, which have been
	used extensively in many areas relevant to homotopy theory, such as {\it
		homotopy coherent algebraic structures} or {\it derived geometry}
	\cite{lurie2017ha}.

	The theory of  Grothendieck toposes has successfully been generalized to the
	higher categorical setting -- both in the context of model categories
	\cite{rezk2010toposes} and \icats \cite{lurie2009higher} -- giving rise to the
	notion of (Grothendieck-)\emph{$\infty$-topos}.
	
	At the same time, categorical logicians have devised a homotopy invariant
	interpretation of \emph{Martin-Löf type theory}~\cite{martin1984intuitionistic},
	known as \emph{homotopy type theory}~\cite{hottbook}. This interpretation was
	quickly conjectured to generalize from homotopy types to arbitrary
	$\infty$-toposes, and a complete proof of this fact has recently been
	given~\cite{shulman2019all}.
	
	Just as the interpretation of higher order logic in $1$-toposes, the
	interpretation of type theory in $\infty$-toposes does not rely on the
	(co)completeness of the topos, which suggested to formulate a notion of
	`finitary' or `elementary' $\infty$-topos as natural target for the
	interpretation of type theory, analogous to Lawvere and Tierney's elementary
	$1$-toposes. Concrete proposals for a definition of elementary $\infty$-topos
	were given in \cite{shulman2017elementary, rasekh2018EHT}, and similarly to the
	first definitions of elementary $1$-topos, these definitions explicitly
	postulate the existence of finite colimits.
	
	This leaves us with the question whether we can recover finite colimits from the
	remaining axioms just as in the $1$-dimensional case. In the present paper we
	give a partial answer, by proving the following main result.
	
	\begin{thmn}[\ref{cor:descent coproducts}]
		Let $\{A_k\}_{k \in I}$ be a finite family of objects in a locally Cartesian
		closed \icat $\C$ with subobject classifier. Then the coproduct $\coprod_{k \in
			I} A_k$ exists, and pullback along the inclusion maps $i_k: A_k \to \coprod_{k
			\in I} A_k$ give rise to an equivalence of $\infty$-categories
		$$(i_k^*)_{k \in I}:\C_{/\coprod_{k \in I} A_k} \to \prod_{k \in I} \C_{/A_k}.$$
	\end{thmn}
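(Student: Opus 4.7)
My approach adapts the classical strategy of Par\'e~\cite{pare1974colimits} and Mikkelsen~\cite{mikkelsen1976lattice} for constructing finite colimits in elementary $1$-toposes to the $\infty$-categorical setting. The essential inputs are the power-object functor $\Omega^{(-)} : \C\op \to \C$, which exists because $\C$ is locally Cartesian closed and has a subobject classifier $\Omega$, together with the internal Heyting-algebra structure on $\Omega$.

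I would first reduce to constructing (i) a strict initial object $\mathbf{0}$, and (ii) binary coproducts $A + B$ satisfying the descent equivalence; general finite $I$-indexed coproducts and their descent property then follow by induction on $|I|$. For (i), take $\mathbf{0}$ to be the subobject of $\mathbf{1}$ classified by $\ff : \mathbf{1} \to \Omega$, concretely the pullback of $\tt$ and $\ff$ along themselves. Strictness --- that every $X \to \mathbf{0}$ is an equivalence --- follows from the classifying property of $\Omega$: the existence of such a map forces $\tt$ and $\ff$ to agree after precomposition with $X \to \mathbf{1}$, collapsing the top and bottom subobjects of $X$.

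For (ii), I would construct $A + B$ via an impredicative encoding as the subobject of $\Omega^A \times \Omega^B$ cut out by the internal predicate ``$\chi_A$ represents a singleton of $A$ and $\chi_B$ represents the empty subobject of $B$, or vice versa.'' This uses the internal Heyting structure on $\Omega$ together with predicates like ``is a singleton'' that are expressible from $\tt : \mathbf{1} \to \Omega$ and the local Cartesian closed structure. The inclusions $i_A : A \to A + B$ and $i_B : B \to A + B$ are the evident characteristic-function maps. The encoding mirrors the type-theoretic definition $A + B := \prod_X (X^A \times X^B \to X)$, with the large product replaced by the universe of propositions $\Omega$.

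The main obstacle is verifying the descent equivalence $\C_{/A+B} \simeq \C_{/A} \times \C_{/B}$ rather than merely the bare universal property of the coproduct. At the $\infty$-categorical level this requires coherent equivalences of mapping spaces at all levels, not just commutativity of finitely many diagrams of $1$-morphisms. I would construct an inverse by gluing: given $X \to A$ and $Y \to B$, use the local Cartesian closed structure to assemble a total object over $A + B$ whose pullbacks along $i_A, i_B$ recover $X$ and $Y$. Verifying that the unit and counit of the resulting pair of functors are equivalences should reduce, via the classifying property of $\Omega$, to showing that $A$ and $B$ sit inside $A + B$ as complementary subobjects, a statement about $0$-truncated data that one can hope to establish by internal logic reasoning in the Heyting algebra $\Omega$.
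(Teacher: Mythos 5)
Your central construction does not survive the passage to $\infty$-categories. A subobject classifier $\tt:1\emb\Omega$ in an \icat with pullbacks has $0$-truncated $\Omega$ (\cref{thm:omega-trunc}), and exponentiation preserves truncatedness as a right adjoint, so $\Omega^A\times\Omega^B$ and all of its subobjects are $0$-truncated. Hence the object you carve out of $\Omega^A\times\Omega^B$ by the ``singleton or empty'' predicate can never be the coproduct of $A$ and $B$ unless both are themselves $0$-truncated; for the same reason the singleton map $A\to\Omega^A$ is not a monomorphism in general, since it factors through a $0$-truncated object and kills all higher homotopy of $A$. This is precisely the first obstacle the paper identifies, and its fix is to replace $\Omega^A$ by the \emph{partial map classifier} $\overline A$, the domain of $\tt_*(A\to 1)$, which receives a genuine monomorphism from $A$ disjoint from a point (\cref{lem:disj-emb}); the coproduct is then the join $A\vee B$ in $\Sub(\overline A\times\overline B)$, using that subobject lattices have finite joins (\cref{thm:sub-join}). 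Your framing via Par\'e's monadicity of $\Omega^{(-)}$ is also a dead end: for $\C=\S$ this functor sends every connected space to $\Omega$ and so is not even conservative.

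The second gap is in verifying the universal property and descent. You propose to reduce everything ``via the classifying property of $\Omega$'' to $0$-truncated data handled by internal logic in $\Omega$, but the classical internal-language argument exhibits the mediating map as a single-valued total relation, which presupposes that the graph $\angs{1,f}:A\to A\times B$ of a map is monic --- false already in $\S$. The paper instead shows (\cref{lem:coprod-from-join}) that contractibility of the relevant mapping-space pullback can be tested after restricting along the two summands $U,V$ with $U\vee V=\top$, using the \emph{object of contractibility} $\isContr(-)$ and its stability under pullback (\cref{prop:iscontr}, \cref{lem:iscontr-subterm}); this homotopical device is what replaces your hoped-for internal-logic reduction, and nothing in your proposal supplies it. Your initial object is also mildly circular: you need to produce $\ff$ (equivalently the least subobject of $1$) before pulling it back against $\tt$; the paper obtains $\bot=\fa_{\pi_1}\pi_2^*\tt$ impredicatively and then needs \cref{thm:initial-equiv} (again via $\isContr$) to upgrade $\Sub(0)\simeq 1$ to the strictness $\C_{/0}\simeq 1$, which does not follow from a purely subobject-level argument in an \icat. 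The one part of your plan that is unproblematic is the last step: once the initial object is strict and binary coproducts are disjoint, universality and the descent equivalence are immediate from local Cartesian closure, since pullback functors are left adjoints and preserve colimits.
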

	
	This result can be reformulated as saying that $\C$ admits a \emph{strict}
	initial object and \emph{disjoint and universal} binary coproducts. Of these
	properties, universality~\cite[Definition 6.1.1.2]{lurie2009higher} and
	strictness say that the respective colimits are preserved by pullback functors,
	which is a direct consequence of local Cartesian closure. \emph{Disjointness} of
	binary coproducts says that the commutative squares 
	\begin{equation*}
		\begin{tikzcd}[sep = small]
			A
			\ar[r,""']
			\ar[d,""]
			&	A
			\ar[d,""]
			\\	A
			\ar[r,""]
			&	A+B
		\end{tikzcd}
		\qquad\qquad
		\begin{tikzcd}[sep = small]
			0
			\ar[r,""']
			\ar[d,""]
			&	B
			\ar[d,""]
			\\	A
			\ar[r,""]
			&	A+B
		\end{tikzcd}
	\end{equation*}
	are pullbacks for all objects $A$, $B$, and the combination of universality and
	disjointness is the special case of Rezk's {\it descent condition} \cite[6.5]{rezk2010toposes} for binary coproducts. In the context of $1$-categories,
	descent for coproducts is also known as {\it extensivity}
	\cite{clw1993extensive}.
	
	\subsection*{What about pushouts?}
	
	Having settled the issue of coproducts, the remaining question is that of 
	pushouts and coequalizers. 
	
	However, it turns out that unlike the $1$-categorical situation, assuming the
	existence of a subobject classifier in fact does not suffice to prove the
	existence of pushouts in locally Cartesian closed $\infty$-categories as we
	illustrate via the following example.
	
	\begin{exm}[\ref{ex:counter}]
		Let $\Str$ be the full subcategory of the $\infty$-category $\S$ of spaces
		spanned by truncated spaces. Then $\Str$ is locally Cartesian closed and the
		discrete space $1\!+\!1$ is a subobject classifier. However, the diagram 
		\begin{center}
			\begin{tikzcd}[row sep=0.5in, column sep=0.5in]
				1 & S^1 \arrow[r] \arrow[l] & 1
			\end{tikzcd}
		\end{center}
		does not have a pushout.
	\end{exm}
	
	We can in fact give a more conceptual argument why it is possible to recover
	coproducts from the subobject classifier but not pushouts: the universal
	property of coproducts in \icats only depends on the homotopy types of the
	mapping spaces,  since the diagram used for coproducts is {\it discrete} and so
	cannot involve any higher homotopies. On the other side the diagram used to
	construct pushouts $(\bullet \leftarrow \bullet \rightarrow \bullet )$ is not
	discrete which means that the universal property of pushouts necessarily
	involves the notion of \emph{homotopy coherent diagram}~\lucite{Section~1.2.6}.
	
	Hence, it remains to determine what precise conditions we need to add to a
	locally Cartesian closed  $\infty$-category with a subobject classifier to be
	able to construct all finite colimits. The current hope is that we can obtain
	this result by additionally assuming the existence of universes.
	
	\subsection*{Structure of the paper}
	
	\cref{sec:lcc} recalls basic facts about locally Cartesian closed
	$\infty$-categories, including the Beck-Chevalley condition (\cref{lem:BC}),
	truncation levels (\cref{suse:truncation}), and the \emph{object of
		contractibility} (\cref{suse:contractibility}) -- a technique which allows to
	reduce contractibility questions to contractibility of subterminals.
	In \cref{sec:soc} we discuss subobject lattices and subobject classifiers, and
	show that if a locally Cartesian closed $\infty$-category has a subobject
	classifier, then its subobject lattices have finite joins (\cref{thm:sub-join}).
	Using this, we show in \cref{sec:initial-objects} that any locally Cartesian
	closed $\infty$-category with a subobject classifier has an initial object
	(\cref{cor:omega-lccc-has-init}), and in \cref{sec:bin-coprods} that it has
	disjoint binary coproducts (\cref{the:coproducts}).
	We conclude in \cref{sec:eitop} by discussing the relevance
	of our result to the notion of `elementary $\infty$-topos'. 
	
	\subsection*{\texorpdfstring{$\infty$}{oo}-Categorical Conventions} 
	
	In this paper we use $\infty$-categorical language and results via the model of
	\emph{quasi-categories} as developed in~\cite{joyal2008notes} and
	\cite{lurie2009higher}. However, the results proven here only rely on `model
	independent' properties of higher categories such as finite limits and locally
	Cartesian closure and so also hold analogously in any other
	$\infty$-cosmos~\cite{riehl2017fibrations}. 
	
	\subsection*{Acknowledgments}
	
	We thank the American Mathematical Society for running the Mathematics Research
	Communities Program in June, 2017, at which this work began, and the National
	Science Foundation for supporting the MRC program. 
	
	The second author would also like to thank the Max-Planck-Institut f{\"u}r
	Mathematik for its hospitality and financial support.
	
	\smallskip
	
	The first author acknowledges support by the Air Force Office of Scientific
	Research under award number
	FA9550-20-1-0305, 
	and by the U.\ S.\ Army Research Office under grant number
	W911NF-21-1-0121. 

	\section{Some Facts about Locally Cartesian Closed
		\texorpdfstring{$\infty$}{oo}-Categories}\label{sec:lcc}
	Let $\C$ be an $\infty$-category with finite limits. Then for every morphism $f:
	A \to B$, the pullback functor $f^*: \C_{/B} \to \C_{/A}$ has a left adjoint
	$f_!: \C_{/A} \to \C_{/B}$ given by post-composition. If $f^*$ furthermore has a
	\emph{right} adjoint $f_*:\C_{/A} \to \C_{/B}$ for all $f$, then $\C$ is called
	{\it locally Cartesian closed}. If $B$ is the terminal object, we informally
	identify $\C$ with $\C_{/1}$ (see~\lucite{1.2.12.4}) and simply write $A_!\adj
	A^* \adj A_*$ for the adjoint string of functors along the terminal projection
	$A\to 1$.
	
	\begin{lemma}[Beck--Chevalley condition]\label{lem:BC}
		Given a pullback square
		\[
		\begin{tikzcd}[row sep = small]
			P   \ar[r,"h"']
			\ar[d,"k"'] 
			& A \ar[d,"f"] 
			\\B \ar[r,"g"]
			& C
		\end{tikzcd}
		\]
		in an $\infty$-category $\C$ with pullbacks, the canonical transformation
		\begin{equation*}
			h_!\circ k^* \to f^*\circ g_!
		\end{equation*}
		is an equivalence. If $\C$ is \lcc , then the canonical
		natural transformation 
		\begin{equation*}
			f^*\circ g_*\to h_*\circ k^*
		\end{equation*}
		is an equivalence.
	\end{lemma}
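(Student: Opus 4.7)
The plan is to prove the two statements separately, deducing the second from the first by a mate argument. For the first (left-adjoint) equivalence, I unfold the definitions: since $g_!$ is post-composition with $g$, for $u: X \to B$ we have $g_! u = g \circ u$, and $f^*(g_! u)$ is the pullback $X \times_C A \to A$. Similarly, $k^* u$ is the pullback $X \times_B P \to P$, and $h_!(k^* u)$ is the composite $X \times_B P \to P \to A$ obtained by post-composing with $h$. Stacking the pullback square defining $k^* u$ on top of the given pullback yields
\[
\begin{tikzcd}[row sep=small]
X \times_B P \ar[r] \ar[d] & X \ar[d,"u"] \\
P \ar[r,"k"'] \ar[d,"h"'] & B \ar[d,"g"] \\
A \ar[r,"f"'] & C
\end{tikzcd}
\]
in which both small squares are pullbacks. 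The canonical transformation $h_! k^* \to f^* g_!$ is induced by the universal property of the lower pullback, and it is an equivalence by the pasting lemma for \icat pullbacks (\lucite{Lemma~4.4.2.1}), which exhibits the outer rectangle as a pullback identifying $X \times_B P$ with $X \times_C A$ over $A$.

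For the second claim, I reduce to the first via the mate correspondence. Applying part one to the \emph{transposed} pullback square (swapping the roles of $h \leftrightarrow k$ and $f \leftrightarrow g$) yields the equivalence $k_! h^* \simeq g^* f_!$. Using the full string of adjunctions $f_! \dashv f^* \dashv f_*$, $g_! \dashv g^* \dashv g_*$, $h_! \dashv h^* \dashv h_*$, $k_! \dashv k^* \dashv k_*$ available in a \lcc \icat, the canonical transformation $f^* g_* \to h_* k^*$ is precisely the mate of $k_! h^* \to g^* f_!$. Concretely, for every $Y \in \C_{/A}$ and $X \in \C_{/B}$ one obtains a chain of natural equivalences
\[
\Map(Y, f^* g_* X) \simeq \Map(g^* f_! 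Y, X) \simeq \Map(k_! h^* Y, X) \simeq \Map(Y, h_* k^* X),
\]
which implements the canonical transformation via the Yoneda lemma.

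The main subtlety is coherence: in the \icat setting the Beck--Chevalley natural transformation is specified only up to contractible choice by the universal properties of the adjunctions and pullbacks involved, so one has to verify that the equivalence obtained from the pasting diagram \emph{is} the canonically defined Beck--Chevalley map, and likewise for the mate construction in the second part. Both verifications are routine once one invokes the $\infty$-categorical pasting lemma and the standard formalism of mates, so no deeper \icat machinery is needed beyond \lucite{Lemma~4.4.2.1}.
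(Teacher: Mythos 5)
Your proof is correct, but it takes a genuinely different route from the paper for the simple reason that the paper gives no argument at all: its ``proof'' of \cref{lem:BC} is a citation to \cite[Lemma 2.1.6]{gepner2017operads} together with the observation that the argument there only uses local Cartesian closure. What you have done is inline the standard proof that the citation outsources: the first half by evaluating $h_!k^*$ and $f^*g_!$ objectwise and applying the pasting lemma for pullbacks to the stacked diagram, and the second half by recognizing $f^*g_*\to h_*k^*$ as the mate of the first Beck--Chevalley equivalence for the transposed square, using the adjoint strings $f_!\adj f^*\adj f_*$ etc.\ available in a \lcc \icat. Both steps are sound, and your chain of equivalences of mapping spaces correctly identifies the right adjoint of $k_!h^*$ as $h_*k^*$ and that of $g^*f_!$ as $f^*g_*$. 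The one place where your write-up is thinner than a fully self-contained proof is exactly the point you flag yourself: verifying that the equivalence produced by the pasting lemma coincides with the \emph{canonical} transformation $h_!k^*\to h_!k^*g^*g_!\simeq h_!h^*f^*g_!\to f^*g_!$ assembled from units and counits, and that the mate of an invertible transformation of left adjoints is the canonical transformation of right adjoints. These are standard facts of the $\infty$-categorical mate calculus, but declaring them ``routine'' is precisely the step the paper sidesteps by citing; for a genuinely self-contained treatment you should either carry out that identification or cite a reference that does. Two cosmetic points: your stacked diagram draws the given square in transposed orientation (with $k$ horizontal and $h$ vertical), which is harmless since a transposed pullback is a pullback but should be said; and the pasting lemma in \cite{lurie2009higher} at the location you cite is stated for pushouts, so strictly you want its dual.
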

	\begin{proof}
		This is proven for the $\infty$-category $\S$ of spaces in \cite[Lemma
		2.1.6]{gepner2017operads}, but the proof only relies on $\C$ being locally
		Cartesian closed.
	\end{proof}
	
	Recall that an $\infty$-category $\C$ is called \emph{Cartesian closed} if it
	has finite products and for every $A\in\C$ the product functor $(-\times
	A):\C\to\C$ has a right adjoint commonly written ${(-)}^A:\C\to\C$ and called
	`exponentiation by $A$'. Every \lcc  $\infty$-category is
	Cartesian closed since $(-\times A)$ can be decomposed as $A_!\circ A^*$, and
	both $A_!$ and $A^*$ have right adjoints -- thus, exponentiation by $A$ is given
	by $A_*\circ A^*$ in this case. Since slices of \lcc 
	$\infty$-categories are obviously \lcc , we can conclude that
	all slices of \lcc  $\infty$-categories are Cartesian closed.\footnote{Conversely, every \icat with finite limits
		and Cartesian closed slices is locally cartesian closed -- the $1$-categorical
		proof of this statement given in~\cite[Corollary~A1.5.3]{elephant1} generalizes
		to \icats in a straightforward manner.}
	
	Moreover, we can deduce from the Beck--Chevalley condition that exponentiation
	commutes with pullback functors:
	
	\begin{lemma}\label{lem:reind-exp} Given morphisms $f:B\to A$, $g:C\to A$, and
		$h:D\to A$ in a \lcc  $\infty$-category $\C$ and $g,h\in
		\C_{/A}$, we have $f^*(h^g)\simeq (f^*h)^{f^*g}$.

	\end{lemma}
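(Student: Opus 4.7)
The plan is to unfold the definition of the exponential in the slice category $\C_{/A}$ and then reduce the claim to a combination of Beck--Chevalley for right adjoints and the functoriality of pullback $(g\circ \pi)^*\simeq \pi^*\circ g^*$.

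First I would recall that for any morphism $g:C\to A$ in a \lcc \icat, exponentiation by $g$ in the slice $\C_{/A}$ is given by $h^g\simeq g_*(g^*h)$, since the product with $g$ in $\C_{/A}$ is $g_!\circ g^*$. Form then the pullback square
\[
\begin{tikzcd}[row sep=small]
P \ar[r,"\pi_B"]\ar[d,"\pi_C"'] & B \ar[d,"f"] \\
C \ar[r,"g"'] & A
\end{tikzcd}
\]
so that the reindexed morphism $f^*g\in\C_{/B}$ is (equivalent to) $\pi_B:P\to B$, and consequently $(f^*g)_*\simeq(\pi_B)_*$ and $(f^*g)^*\simeq(\pi_B)^*$.

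Next I would compute $f^*(h^g)\simeq f^*g_*(g^*h)$ and apply the second Beck--Chevalley equivalence from \cref{lem:BC} to move $f^*$ past $g_*$, obtaining $f^*g_*(g^*h)\simeq(\pi_B)_*(\pi_C)^*(g^*h)$. Then use functoriality of the pullback functors together with the commutativity $g\circ\pi_C\simeq f\circ\pi_B$ of the square to rewrite $(\pi_C)^*\circ g^*\simeq(g\circ\pi_C)^*\simeq(f\circ\pi_B)^*\simeq(\pi_B)^*\circ f^*$. This yields
\[
f^*(h^g)\;\simeq\;(\pi_B)_*(\pi_B)^*(f^*h)\;\simeq\;(f^*g)_*(f^*g)^*(f^*h)\;\simeq\;(f^*h)^{f^*g},
\]
where the last equivalence uses the formula for exponentials in $\C_{/B}$ applied to $f^*g$ and $f^*h$.

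I do not anticipate a serious obstacle: the argument is essentially a bookkeeping exercise in the adjoint calculus, and the only nontrivial ingredient (Beck--Chevalley for right adjoints) has already been established. The main thing to be careful about is keeping track of which slice each object and functor inhabits, and making sure the coherences supplied by the pullback square and by the fact that the various natural equivalences are canonical are chained in a consistent way.
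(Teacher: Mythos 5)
Your proposal is correct and follows essentially the same route as the paper's proof: unfold the exponential as $g_*\circ g^*$, apply the Beck--Chevalley equivalence for the pullback square of $g$ along $f$, use commutativity of that square to rewrite the composite of pullback functors, and refold into the exponential in $\C_{/B}$. Your $\pi_B$ and $\pi_C$ are exactly the paper's $f^*g$ and $\overline{f}$, so the two arguments coincide step for step.
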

	\begin{proof}
		Form the pullback square 
		\begin{equation*}\begin{tikzcd}
				P
				\ar[r,"\overline{f}"']
				\ar[d,"f^*g"']
				&	C
				\ar[d,"g"]
				\\	B
				\ar[r,"f"]
				&	A
		\end{tikzcd}\end{equation*}
		of $g$ along $f$. We have 
		\begin{empheq}{align*}
			f^*(h^g) &\simeq (f^*\circ g_*\circ g^*)(h) \\
			&\simeq ((f^*g)_*\circ \overline{f}^*\circ g^*)(h) &\text{by the Beck--Chevalley condition}\\
			&\simeq ((f^*g)_*\circ (f^*g)^*\circ f^*)(h) &\text{since the square commutes}\\
			&\simeq (f^*h)^{f^*g}\,\,.
		\end{empheq}
	\end{proof}
	
	\subsection{Truncation and monomorphisms}\label{suse:truncation}
	For $n\geq -2$, recall that an object $A$ in an \icat $\C$ is called 
	\emph{$n$-truncated} if the mapping space $\Map_\C(X,A)$ is $n$-truncated
	for all objects $X\in\C$. The object is called \emph{contractible} or 
	\emph{terminal} if it is
	$(-2)$-truncated, and \emph{subterminal} if it is $(-1)$-truncated.
	
	An \emph{arrow} $f:A\to B$ in $\C$ is called {$n$-truncated} if for all $X\in\C$
	the postcomposition operation $\Map_\C(X,f):\Map_\C(X,A)\to\Map_\C(X,B)$ is an $n$-truncated map in
	$\S$, \ie if its fibers are $n$-truncated spaces. 
	If $\C$ has a terminal object $1$ then an object $A$ is $n$-truncated iff the
	morphism $A\to 1$ is $n$-truncated. Conversely, $f:A\to B$ is $n$-truncated as a
	morphism in $\C$ iff it is $n$-truncated as an object in 
	$\C_{/B}$.
	
	A morphism $f:A\to B$ is $(-2)$-truncated iff it is an equivalence. If $\C$ has
	pullbacks, then $f:A\to B$ is $(n+1)$-truncated iff its diagonal $\delta_f:A\to
	A\times_B A$ is $n$-truncated.

	Maps that are $(-1)$-truncated are also called \emph{monomorphisms}. Thus, $f:A\to B$ is an
	monomorphism  iff its diagonal $\delta_f: A \to A \times_B A$ is an equivalence, \ie the commutative
	square
	\[
	\begin{tikzcd}
		A		\ar[r,"\id"']
		\ar[d,"\id"'] 
		&	A		\ar[d,"f"]
		\\  A       \ar[r,"f"] 
		&   B
	\end{tikzcd}
	\]
	is a pullback.
	\begin{lemma} \label{lem:emb-shriek-corefl}
		Let $m:U\emb A$ be a monomorphism in an \icat $\C$ with 
		finite limits.
		\begin{enumerate} 
			\item For every $f:B\to U$, the commutative square 
			\(
			\begin{tikzcd}[sep = small]
				B	\rar["\id"']
				\dar["f"']
				& 	B	\dar["m\circ f"]
				\\  U 	\rar[tail,"m"]
				&	A 
			\end{tikzcd}
			\)
			is a pullback.
			\item\label{lem:emb-shriek-corefl-corefl}
			The adjunction $m_!\adj m^*$ is a coreflection, \ie its unit is an
			equivalence.
			\item\label{lem:emb-shriek-corefl-refl}
			If $\C$ is \lcc  then the adjunction $m^*\adj m_*$
			is a reflection, \ie its counit is an equivalence.
		\end{enumerate}
	\end{lemma}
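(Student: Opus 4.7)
The plan is to prove the three parts sequentially, with part (1) the geometric core of the lemma and parts (2) and (3) following by formal adjunction arguments. The central input throughout is the characterization of monomorphisms recalled just before the statement: $m$ is a monomorphism precisely when the diagonal $U\to U\times_A U$ is an equivalence, equivalently when the diagonal square of $m$ is a pullback.

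For part (1), I would show that the canonical comparison map $B \to U\times_A B$ induced by the commuting square is an equivalence. Exploiting $U\simeq U\times_A U$ together with the trivial equivalence $U\times_U B\simeq B$ (where the product is formed along $f$ and $\id_U$) and associativity of pullbacks, one obtains the chain
\[
B \;\simeq\; U\times_U B \;\simeq\; (U\times_A U)\times_U B \;\simeq\; U\times_A(U\times_U B) \;\simeq\; U\times_A B,
\]
and a check on projections identifies this composite with the canonical comparison map, so the square is indeed a pullback.

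Part (2) is then immediate: for $X = (f\colon B\to U)\in\C_{/U}$ the object $m^*m_!X$ is by construction the pullback $U\times_A B$ viewed as an object of $\C_{/U}$ via the first projection, and the unit $\eta_X\colon X\to m^*m_!X$ is, by its universal characterization, precisely the comparison map shown to be an equivalence in part (1). Hence $m_!\dashv m^*$ is a coreflection.

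For part (3), I would apply the second (LCC-dependent) form of the Beck--Chevalley condition from \cref{lem:BC} to the pullback square of part (1) specialized to $B=U$, $f=\id_U$, which is just the diagonal square of the monomorphism $m$. This yields a natural equivalence $m^*\circ m_* \simeq \id_*\circ\id^* = \id_{\C_{/U}}$, and a short unwinding of the explicit formula for the Beck--Chevalley transformation (built as a composite $m^*m_*\to\id_*\id^*m^*m_*=m^*m_*\to\id$ out of the units and counits of the adjunctions in the square) shows that in this degenerate situation it collapses to the counit of $m^*\dashv m_*$. The main subtlety of the proof lies in this last step: one must be careful to identify the Beck--Chevalley equivalence with the counit rather than merely establishing an abstract natural equivalence of functors.
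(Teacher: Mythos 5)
Your proof is correct. Parts (1) and (2) are essentially the paper's argument: part (1) amounts to pasting the (trivially Cartesian) square for $f$ on top of the diagonal square of $m$, which is a pullback precisely because $m$ is a monomorphism --- your chain of equivalences $B\simeq U\times_U B\simeq(U\times_AU)\times_UB\simeq U\times_AB$ is just this pasting written out in terms of associativity of pullbacks --- and part (2) is the observation that the unit of $m_!\adj m^*$ at $f$ is exactly the comparison map of that square. Where you genuinely diverge is part (3). The paper deduces it purely formally from part (2) using the fact that in an adjoint triple $m_!\adj m^*\adj m_*$ the leftmost functor is fully faithful iff the rightmost is; this needs no Beck--Chevalley input at all. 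Your route through \cref{lem:BC} applied to the diagonal square also works: in the degenerate case $h=k=\id$ the mate $m^*m_*\to\id_*\id^*$ is assembled from the (trivial) unit of $\id\dashv\id$ and the counit of $m^*\adj m_*$, so it literally is the counit, and \cref{lem:BC} asserts that it is an equivalence. You are right to flag the identification of the mate with the counit as the step that cannot be skipped --- an unidentified natural equivalence $m^*m_*\simeq\id$ would not by itself yield that the counit is one without a further (classical but nontrivial) lemma. On balance the paper's argument is lighter, relying only on a standard adjoint-triple fact, while yours trades that fact for a small mate computation; both are sound.
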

	\begin{proof}
		The first claim follows from the pullback lemma since both small squares in the 
		following diagram are pullbacks.
		\begin{equation*}
			\begin{tikzcd}[row sep = small]
				B   \ar[r,"\id"]
				\ar[d,"f"']
				&   B   \ar[d,"f"]
				\\  U   \ar[r,"\id"]
				\ar[d,"\id"']
				&   U   \ar[d,"m"]
				\\  U   \ar[r,"m"]
				&   A
			\end{tikzcd}
		\end{equation*}
		The second claim follows from the first since the unit of $m_!\adj m^*$ at $f:U\to A$
		is the canonical map from $f$ to $m^*(m\circ f) $. The third claim follows from
		the second since the rightmost functor in an adjoint triple is fully faithful 
		iff
		the leftmost is.
	\end{proof}
	
	\begin{lemma}\label{lem:subterm-equiv} 
		Two subterminal objects $A$, $B$ in an $\infty$-category $\C$ are equivalent 
		whenever there exist 
		maps $f:A\to B$ and $g:B\to A$.
	\end{lemma}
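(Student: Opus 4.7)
The plan is to use the defining property of subterminal objects: an object $A$ is subterminal precisely when $\Map_\C(X,A)$ is either empty or contractible for every $X\in\C$. Given the two maps $f:A\to B$ and $g:B\to A$, the strategy is to show that $f$ and $g$ are mutually inverse equivalences, and this will follow almost immediately once we observe that the relevant endomorphism spaces are contractible.

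First, I would note that $\Map_\C(A,A)$ is non-empty, since it contains $\id_A$, and therefore -- because $A$ is $(-1)$-truncated -- it must be contractible. Analogously, $\Map_\C(B,B)$ is contractible since it contains $\id_B$. Consequently, the two elements $g\circ f$ and $\id_A$ of $\Map_\C(A,A)$ are connected by an (essentially unique) homotopy, so $g\circ f\simeq \id_A$, and symmetrically $f\circ g\simeq \id_B$.

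From these two homotopies we conclude that $f$ is an equivalence with inverse $g$, hence $A\simeq B$. There is no real obstacle here: the whole argument is a direct unpacking of what it means to be $(-1)$-truncated, combined with the fact that the identity exhibits the relevant mapping spaces as non-empty. The only subtlety, if any, is remembering that in an $\infty$-categorical context "being inverse" means being connected by homotopies rather than literal equalities, but this is exactly what contractibility of the mapping spaces provides.
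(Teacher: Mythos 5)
Your proof is correct and is essentially the same as the paper's: the paper's one-line argument that ``all parallel maps into a subterminal are homotopic, in particular every endomorphism is homotopic to the identity'' is exactly your observation that $\Map_\C(A,A)$ and $\Map_\C(B,B)$ are nonempty and $(-1)$-truncated, hence contractible, so $g\circ f\simeq\id_A$ and $f\circ g\simeq\id_B$. Nothing further is needed.
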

	\begin{proof}
		This follows since all parallel maps into a subterminal are homotopic, in 
		particular every endomorphism is homotopic to the identity.
	\end{proof}
	
	\begin{lemma}\label{lem:section-mono}
		Let $A$ and $B$ be $0$-truncated
		objects in an $\infty$-category $\C$, and let $m: A\to B$, $e:B\to A$ such that
		$e\circ m=\id_A$ in $\Ho(\C)$. Then $m$ is a monomorphism.
	\end{lemma}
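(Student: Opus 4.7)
My plan is to verify directly that $m$ is a monomorphism via the mapping-space characterization of $(-1)$-truncated morphisms: it suffices to show that for every object $X \in \C$, the map $\Map_\C(X,m) : \Map_\C(X,A) \to \Map_\C(X,B)$ is a $(-1)$-truncated map of spaces.

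The first step is to exploit the $0$-truncation hypothesis. Since $A$ and $B$ are $0$-truncated objects of $\C$, both $\Map_\C(X,A)$ and $\Map_\C(X,B)$ are $0$-truncated spaces for every $X$, hence equivalent to discrete sets. For a morphism between $0$-truncated spaces, being $(-1)$-truncated is equivalent to being injective on $\pi_0$, so the problem reduces to a purely set-theoretic statement.

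The second step is to translate the hypothesis $e \circ m = \id_A$ in $\Ho(\C)$ into a homotopy in $\C$: by definition this means $e \circ m$ and $\id_A$ lie in the same component of $\Map_\C(A,A)$, hence are homotopic in $\C$. Applying the $\infty$-functor $\Map_\C(X,-) : \C \to \S$ then produces an equivalence
\[
\Map_\C(X,e) \circ \Map_\C(X,m) \simeq \id_{\Map_\C(X,A)},
\]
exhibiting $\Map_\C(X,m)$ as a split monomorphism in $\S$. Combined with the first step, $\Map_\C(X,m)$ is a split monomorphism between sets, hence injective, hence $(-1)$-truncated as required.

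There is essentially no technical obstacle here; the only subtlety worth noting is the conceptual translation between equalities in $\Ho(\C)$ and homotopies in $\C$, since $\Ho(\C)$ is by definition the $1$-category obtained by taking connected components of mapping spaces. The substantive content of the argument is the classical fact that a split monomorphism of sets is injective, with the $0$-truncation hypothesis serving precisely to reduce the $\infty$-categorical statement to this $1$-categorical one.
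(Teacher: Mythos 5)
Your proof is correct, but it takes a genuinely different route from the paper's. The paper verifies the diagonal characterization: it reduces to $\S$ via the corepresentables $\Map_\C(X,-)$ and then shows $\delta_m:A\to A\times_B A$ is an equivalence by noting that both $\delta_A$ and $\ker(m)$ (the pullback of $\delta_B$ along $m\times m$) are monomorphisms into $A\times A$ -- this is where the $0$-truncation of $A$ and $B$ enters -- so that by \cref{lem:subterm-equiv} it suffices to produce a map $A\times_B A\to A$ over $A\times A$ in the other direction, which is extracted from a pullback along $(e\times e)\circ(m\times m)\simeq\id$. You instead stay entirely at the level of mapping spaces: $0$-truncation makes $\Map_\C(X,A)$ and $\Map_\C(X,B)$ essentially discrete, a map of $0$-truncated spaces is $(-1)$-truncated precisely when it is injective on $\pi_0$, and the retraction $e$ supplies the splitting that forces injectivity. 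Both arguments use the $0$-truncation hypothesis for the same purpose -- to collapse an $\infty$-categorical question to a $1$-categorical one -- but your version is more elementary: it avoids the diagonal characterization, the auxiliary subterminal lemma, and any appeal to pullbacks in $\C$, so it applies verbatim in an arbitrary $\infty$-category, whereas the paper's diagrammatic argument has the mild virtue of being internal to any $\infty$-category with finite limits without passing through $\pi_0$. One small point worth making explicit in your write-up: for a map of $0$-truncated spaces, the homotopy fiber over a point agrees with the set-theoretic fiber (each relevant component being contractible), which is exactly why ``injective on $\pi_0$'' suffices; you assert the equivalence correctly but it deserves a sentence.
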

	\begin{proof}
		We give a proof in $\S$ (or in any $\infty$-category with finite limits), the 
		proof in general $\infty$-categories reduces to $\S$ by applying 
		corepresentable functors $\Map_\C(X,-)$.
		
		We have to show that $\delta_m:A\to 
		A\times_B A$ is an equivalence. This map may be viewed as a map in the slice 
		category over $A\times A$:
		\begin{equation*}\begin{tikzcd}
				A           \ar[rr,bend left,"m" description]
				\ar[dr,"\delta_A"',tail]
				\ar[r,dashed,"\delta_m" description]
				&           A\times_BA  \ar[r,""']
				\ar[d,"\ker(m)",tail]
				&	        B           \ar[d,"\delta_B",tail]
				\\&	        A\times A   \ar[r,"m\times m"]
				&	        B\times B
		\end{tikzcd}\end{equation*}
		Since $\delta_A$ and $\ker(m)$ are monomorphisms it is sufficient by 
		\cref{lem:subterm-equiv} to exhibit a map over $A\times A$ in the opposite direction of $\delta_m$. Such a map 
		is given by the mediating map in the following diagram
		\begin{equation*}\begin{tikzcd}
				A\times_BA  \ar[rr,]
				\ar[rd,dashed]
				\ar[rdd,tail,"\ker(m)"']
				&[-20pt]&   B           \ar[rd,"e",shift left = 1]
				\ar[dd,tail,"\delta_B" near end]
				\\[-15pt]&  A           \ar[rr,"\id"' near start, crossing over]
				\ar[d,"\delta_A",tail]
				&&          A           \ar[d,"\delta_A",tail]
				\\[+3pt]&   A\times A   \ar[r,"m\times m"]
				&           B\times B   \ar[r,"e\times e"]
				&           A\times A
		\end{tikzcd}\end{equation*}
		where the front rectangle is a pullback since $(e\times e)\circ(m\times m)\simeq\id$.
	\end{proof}
	
	\subsection{The object of contractibility}\label{suse:contractibility}
	
	Finally we will make use of the object of contractibility, motivated from 
	homotopy type theory.
	
	\begin{definition}\label{def:iscontr} 
		Given an object $A$ in a \lcc  $\infty$-category $\C$,
		we define the object $\isContr(A)$ by $\isContr(A)=A_!(\pi_*\delta_A)$, where 
		$\delta_A:A\to A\times A$ is the diagonal and $\pi:A\times A\to A$ is the 
		first projection.
	\end{definition}
	\begin{proposition}\label{prop:iscontr}
		Let $\C$ be a \lcc  $\infty$-category and let $A\in\C$.
		\begin{enumerate}
			\item \label{prop:iscontr-subterm}
			The object $\isContr(A)$ is always subterminal.
			\item \label{prop:iscontr-term}
			$A$ is terminal iff $\isContr(A)$ is terminal.
			\item \label{prop:iscontr-pullback}
			Given a second object $B$, we have $B^*(\isContr(A))\simeq
			\isContr(B^*A)$ in $\C_{/B}$.
		\end{enumerate}
	\end{proposition}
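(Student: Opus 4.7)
I would prove the three claims in the reverse order (3), (2), (1), since (1) relies on the other two. For (3), iterated application of the Beck--Chevalley condition (\cref{lem:BC}) suffices: applied to the pullback square of the terminal maps $A\to 1$ and $B\to 1$, it yields $B^*\circ A_! \simeq (\pi_B)_!\circ(\pi_A)^*$, where $\pi_A,\pi_B$ are the projections from $A\times B$; a second application, to the pullback square of $\pi_A:A\times B\to A$ and $\pi:A\times A\to A$, yields $(\pi_A)^*\circ\pi_*\simeq q_*\circ r^*$ for the projections $r:A\times A\times B\to A\times A$ and $q:A\times A\times B\to A\times B$. Composing, $B^*\isContr(A) \simeq (\pi_B)_!\,q_*\,r^*\delta_A$, and one observes that $r^*\delta_A$ is canonically the diagonal of $B^*A$ computed in $\C_{/B}$ while $q_*$ and $(\pi_B)_!$ correspond, under the canonical equivalences of slice categories, to the functors $\pi_*$ and $(B^*A)_!$ used in defining $\isContr(B^*A)$ inside $\C_{/B}$.

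For (2), the forward direction is immediate: if $A\simeq 1$ then $\delta_A$ and $\pi$ are equivalences, so $\pi_*\delta_A\simeq\id_A$ in $\C_{/A}$ and $\isContr(A)=A_!(\id_A)\simeq A\simeq 1$. For the converse, the adjunctions $A_!\adj A^*$ and $\pi^*\adj\pi_*$ yield
\[
\Map_\C(1,\isContr(A))\;\simeq\;\Map_{\C_{/A\times A}}(\id_{A\times A},\delta_A),
\]
the space of sections of $\delta_A$. If $\isContr(A)\simeq 1$ this space is inhabited by some $s:A\times A\to A$, and composing $\delta_A\circ s\simeq\id_{A\times A}$ with the two projections of $A\times A$ gives $\pi_1\simeq s\simeq\pi_2$. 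Hence $\delta_A\circ\pi_1\simeq(\pi_1,\pi_1)\simeq(\pi_1,\pi_2)=\id_{A\times A}$ exhibits $\delta_A$ as an equivalence and $A$ as subterminal; the structure map $\pi_*\delta_A$ further provides a point $1\simeq\isContr(A)\to A$, so $A\simeq 1$ by \cref{lem:subterm-equiv}.

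For (1), set $C=\isContr(A)$. Since $C$ is subterminal iff $C^*C$ is terminal in $\C_{/C}$, and by (3) $C^*C\simeq\isContr(C^*A)$ in $\C_{/C}$, it suffices by (2) applied in the \lcc \icat $\C_{/C}$ to show that $C^*A$ is terminal in $\C_{/C}$, \ie that the projection $p_C:C\times A\to C$ is an equivalence. The counit of $\pi^*\adj\pi_*$ at $\delta_A$ is a morphism in $\C_{/A\times A}$ that unpacks to a map $e:C\times A\to A$ satisfying $\delta_A\circ e\simeq(c\circ p_C,\,p_A)$, where $c:C\to A$ is the structure map of $\pi_*\delta_A$ and $p_A:C\times A\to A$ the other projection. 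Projecting gives $c\circ p_C\simeq e\simeq p_A$, whence $(\id_C,c):C\to C\times A$ is inverse to $p_C$ over $C$: $p_C\circ(\id_C,c)=\id_C$ and $(\id_C,c)\circ p_C=(p_C,c\circ p_C)\simeq(p_C,p_A)=\id_{C\times A}$. The main obstacle I anticipate is the slice-categorical bookkeeping in (3) -- tracking the Beck--Chevalley equivalences through the canonical identifications of the functors of $\C_{/B}$ -- together with ensuring that the product-universal-property manipulations yield honest $\infty$-coherent equivalences.
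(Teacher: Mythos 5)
Your treatment of part (3) is essentially the paper's proof: the paper likewise applies the Beck--Chevalley condition to the two pullback squares in the composite rectangle $B\times A\times A\to B\times A\to B$ over $A\times A\to A\to 1$ and then translates through the canonical slice equivalences, so that portion checks out. For parts (1) and (2) the paper simply cites an external reference, so there your self-contained arguments are a genuine addition; the argument for (1) via the counit of $\pi^*\adj\pi_*$ (extracting $c\circ p_C\simeq p_A$ and hence an inverse $(\id_C,c)$ to $p_C$) is correct and pleasantly type-theoretic, and the forward direction of (2) is fine.

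However, the converse direction of (2) contains a genuine error. The claimed equivalence
\[
\Map_\C(1,\isContr(A))\;\simeq\;\Map_{\C_{/A\times A}}(\id_{A\times A},\delta_A)
\]
cannot follow from the adjunctions $A_!\adj A^*$ and $\pi^*\adj\pi_*$: the adjunction $A_!\adj A^*$ computes maps \emph{out of} $A_!(W)$, not into it, so $\Map_\C(1,A_!W)$ is not $\Map_{\C_{/A}}(\id_A,W)$. Indeed the right-hand side of your equivalence is the space of sections of $\pi_*\delta_A$ over $A$ (type-theoretically $\prod_{x,y:A}(x=y)$, subterminality of $A$), whereas the left-hand side is the space of global points of $A_!(\pi_*\delta_A)$ (the $\Sigma$-version, contractibility); for $A=\emptyset$ in $\S$ the former is a point and the latter is empty. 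The conclusion is still reachable: a global point of $\isContr(A)$ consists of a point $x:1\to A$ together with a map $x\to\pi_*\delta_A$ in $\C_{/A}$, which transposes under $\pi^*\adj\pi_*$ alone to a map $\pi^*x=(x\circ{!_A},\id_A)\to\delta_A$ over $A\times A$; projecting gives $\id_A\simeq x\circ{!_A}$ and hence $A\simeq 1$. (Alternatively, your counit computation from part (1) specializes, when $C\simeq 1$, to exactly this homotopy.) So the gap is localized and repairable, but as written the key transposition in (2) is wrong.
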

	\begin{proof}
		For 1,2 see \cite[Subsection 4.8]{rasekh2021nno}. 
		
		The third claim follows from the Beck--Chevalley condition for the pullback
		squares
		\begin{equation*}\begin{tikzcd}
				B\times A\times A
				\ar[r,""']
				\ar[d,""]
				&   B\times A
				\ar[r,""']
				\ar[d,""]
				&	B
				\ar[d,""]
				\\	A\times A
				\ar[r,""]
				&	A
				\ar[r,""]
				&	1
		\end{tikzcd}\end{equation*}
		together with the equivalences 
		\begin{equation*}
			\C_{/1}\simeq \C\qquad
			\C_{/(B\times A)}\simeq(\C_{/B})_{/{(B^*A)}}\qquad
			\C_{/(B\times A\times A)}\simeq (\C_{/B})_{/(B^*A\times B^*A)}
		\end{equation*}
		where we already commented on the first one, and the other two two are special cases of
		the dual of \lucite{2.1.2.5}.
	\end{proof}
	
	For more details on the object of
	contractibility in \lcc  $\infty$-categories see
	\cite[Subsection 4.8]{rasekh2021nno}.
	
	\section{Subobject Classifiers in \texorpdfstring{$\infty$}{oo}-Categories}
	\label{sec:soc}

	\subsection{Subobject lattices}  
	
	Let $\C$ be an \icat with pullbacks.
	The \emph{subobject lattice} 
	$\Sub(A)$ of an object $A$ in $\C$ is the full 
	subcategory of $\C_{/A}$ spanned by monomorphisms.
	Then $\Sub(A)$ is closed under finite limits in $\C_{/A}$, and since parallel maps 
	between subterminal objects are always homotopic it is (equivalent to 
	the nerve of) a poset, whence the finite limits are actually finite `meets' 
	(infima), \ie $\Sub(A)$ is a \emph{meet-semilattice}.
	
	If $\C$ is \lcc  then the 
	Cartesian closure of its slices $\C_{/A}$ is inherited by the subobject lattices 
	$\Sub(A)$ since exponentiation preserves truncatedness as a right adjoint. We
	shall refer to Cartesian closed posets as \emph{Heyting 
		semilattices}\footnote{This is a back-formation from the common term 
		\emph{Heyting algebra}, which in our terminology is a Heyting semilattice with finite 
		joins.}. The Cartesian exponentiation operation is called \emph{Heyting 
		implication} in the posetal case, and denoted $(-\imp-)$.
	
	For $f:B\to A$, the pullback functor $f^*:\C_{/A}\to\C_{/B}$ restricts to a 
	monotone and finite-meet-preserving map between subobject lattices.
	\[
	\begin{tikzcd}
		\Sub(B)\ar[d,hook]
		&
		\ar[l,"f^*"',dashed]
		\Sub(A)\ar[d,hook]\\
		\C_{/B}
		&
		\ar[l,"f^*"']
		\C_{/A}
	\end{tikzcd}
	\]
	If $\C$ is \lcc , then $f^*$ furthermore preserves Heyting
	implication by \cref{lem:reind-exp}, \ie it is a 
	\emph{morphism of Heyting semilattices}.
	
	Since homotopic maps in $\C$ induce equal maps between subobject lattices,
	the assignment $A\mapsto\Sub(A)$ is functorial \emph{on the homotopy category}, 
	i.e.\ it gives rise to a contravariant functor 
	\begin{equation}\label{eq:subfun}
		\Sub(-)\;:\;\Ho(\C)\op\;\to\;\HSLat
	\end{equation}
	into the category $\HSLat$ of Heyting semilattices and monotone maps 
	preserving finite meets and Heyting implication.
	
	The postcomposition maps $f_!:\C_{/B}\to\C_{/A}$ do not generally restrict to 
	subobject lattices (only if $f$ itself is a monomorphism), but if $\C$ is 
	\lcc then the right adjoints $f_*$ restrict to monomorphisms, so that for each 
	$f:B\to A$ the adjunction between slices restricts to an adjunction between 
	subobject lattices.
	\[
	\begin{tikzcd}[sep = large]
		\Sub(B)\ar[d,hook]
		\ar[r,"\fa_f"',bend right=18]
		\ar[r,phantom,"\bot"]
		&
		\ar[l,"f^*"',bend right=18]
		\Sub(A)\ar[d,hook]\\
		\C_{/B}
		\ar[r,"f_*"',bend right=18]
		\ar[r,phantom,"\bot"]
		&
		\ar[l,"f^*"',bend right=18]
		\C_{/A}
	\end{tikzcd}
	\]
	In other words, for each $f:B\to A$ in $\Ho(\C)$, the monotone map
	$f^*:\Sub(A)\to\Sub(B)$ has a right adjoint which we denote 
	$\fa_f:\Sub(B)\to\Sub(A)$. 
	
	By uniqueness of adjoints, this `universal quantification' operation gives 
	rise to a \emph{covariant} functor of type $\Ho(\C)\to\SLat$ with the same 
	object part as~\eqref{eq:subfun}.
	
	\subsection{Subobject classifiers} 
	
	Let $\C$ be again an \icat with pullbacks.
	We define $\cMono(\C)$ to be the non-full subcategory of the arrow category $\Fun(\Delta^1,\C)$
	with monomorphisms as objects and pullback squares as morphisms.
	Then the codomain projection $p : \cMono(\C)\to\C$ is a {right} fibration~\lucite{6.1.3.4}.
	Observe that for $A$ in $\C$, the fiber of $p$ over $A$ is a Kan complex which is equivalent
	to the underlying set of $\Sub(A)$. 
	We recall the following definition from~\lucite{6.1.6.1}.
	
	\begin{definition}
		A \emph{subobject classifier} in $\C$ is a terminal object in $\cMono(\C)$. 
	\end{definition}
	
	Thus, a subobject classifier is a monomorphism from which any other monomorphism can be obtained as a pullback in an essentially unique way.
	
	\begin{theorem}\label{thm:omega-trunc}
		Let $\tt:U\emb\Omega$ be a subobject classifier in an \icat $\C$ with
		pullbacks. Then $U$ is terminal and $\Omega$ is $0$-truncated.
	\end{theorem}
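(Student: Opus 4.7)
The plan is to prove both claims by exploiting the universal property of $\tt$ as a terminal object of $\cMono(\C)$, together with the right-fibration structure of the codomain projection $p:\cMono(\C)\to\C$ recalled above.

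For the terminality of $U$, I would fix an arbitrary $A\in\C$ and compute the mapping space $\Map_\C(A,U)$ by recognising it as $\Map_{\cMono(\C)}(\id_A,\tt)$. The standard formula for mapping spaces in an arrow category presents $\Map_{\Fun(\Delta^1,\C)}(\id_A,\tt)$ as the pullback
\[
\Map_\C(A,U)\times_{\Map_\C(A,\Omega)}\Map_\C(A,A)
\]
whose two legs are post-composition with $\tt$ and pre-composition with $\id_A$; since the latter is the identity, the pullback collapses to $\Map_\C(A,U)$. Passing from $\Fun(\Delta^1,\C)$ to the subcategory $\cMono(\C)$ restricts to the subspace of pullback squares, but by \cref{lem:emb-shriek-corefl}(1) applied to the mono $\tt$, every commutative square
\[
\begin{tikzcd}
A \ar[r,"u"] \ar[d,"\id"'] & U \ar[d,"\tt"] \\ A \ar[r,"\tt\circ u"] & \Omega
\end{tikzcd}
\]
is already a pullback, so no components are lost. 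Hence $\Map_{\cMono(\C)}(\id_A,\tt)\simeq\Map_\C(A,U)$, and terminality of $\tt$ yields contractibility, so $U$ is terminal.

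For $\Omega$ being $0$-truncated, I would invoke the standard fact that a right fibration admits a terminal object if and only if it is representable, with representing object the image of that terminal under the fibration. Applied to $p$, whose terminal object $\tt$ sits over $\Omega$, this gives an equivalence of right fibrations $\cMono(\C)\simeq\C_{/\Omega}$ and, on fibers over $X$, an equivalence $\Map_\C(X,\Omega)\simeq p^{-1}(X)$. Since the paper has already identified the fiber $p^{-1}(X)$ with the underlying set of the poset $\Sub(X)$, which is a discrete Kan complex, the mapping space $\Map_\C(X,\Omega)$ is $0$-truncated for every $X$, making $\Omega$ itself $0$-truncated.

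The main subtlety in this approach is the identification $\Map_{\cMono(\C)}(\id_A,\tt)\simeq\Map_\C(A,U)$, which relies crucially on the observation from \cref{lem:emb-shriek-corefl}(1) that every relevant commutative square is automatically a pullback; once this is in hand, everything else reduces to well-established facts about right fibrations and their representability.
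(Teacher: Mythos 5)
Your proof is correct and follows essentially the same route as the paper: both claims rest on the terminality of $\tt$ in $\cMono(\C)$ together with the identification of the fibers of the right fibration $p$ with the sets underlying the subobject posets, and your computation of $\Map_{\cMono(\C)}(\id_A,\tt)$ (using that every commutative square from $\id_A$ to $\tt$ is automatically a pullback, which is \cref{lem:emb-shriek-corefl}(1) transposed) is just a careful unpacking of the paper's one-line remark that $U$ ``classifies maximal subobjects''. One small typo: the second factor in your pullback formula for the arrow-category mapping space should be $\Map_\C(A,\Omega)$, the mapping space between the codomains, rather than $\Map_\C(A,A)$ --- the subsequent step, where the pullback collapses along the identity map, already uses the correct version.
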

	\begin{proof}
		The object $\Omega$ is $0$-truncated because for every object $A$, the space 
		$\Map_\C(A,\Omega)$ is equivalent to the fiber of $p$ over $A$ and therefore
		to the underlying set of $\Sub(A)$, \ie $\Map(-,A)$ classifies the $0$-presheaf
		of subobjects. The object $U$ is terminal since it classifies \emph{maximal} subobjects. 
	\end{proof}
	
	\begin{lemma}\label{lem:not-retract}
		Let $m:A\to B$, $e:B\to A$ be maps in a \lcc 
		$\infty$-category $\C$ such that $e\circ m=\id_A$ in $\Ho(\C)$. Then given 
		$U\in\Sub(B)$, we have $\fa_e\,U\leq m^*\,U$ in $\Sub(A)$.
	\end{lemma}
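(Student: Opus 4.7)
The plan is to exploit the adjunction $e^*\dashv \forall_e$ between subobject lattices together with the fact established in the discussion preceding~\eqref{eq:subfun} that $\Sub(-)$ is contravariantly functorial on $\Ho(\C)$. In particular, the hypothesis $e\circ m=\id_A$ in $\Ho(\C)$ will let us conclude $m^*\circ e^*=\id_{\Sub(A)}$, and combining this with the counit of the adjunction gives the inequality almost immediately.

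Concretely, I would proceed in three short steps. First, instantiate the counit of $e^*\dashv\forall_e$ (equivalently, apply the adjunction transpose to $\id_{\forall_e U}$) at the object $U\in\Sub(B)$ to obtain $e^*(\forall_e U)\leq U$ in $\Sub(B)$. Second, apply the monotone map $m^*:\Sub(B)\to\Sub(A)$ to both sides to get
\begin{equation*}
m^*\bigl(e^*(\forall_e U)\bigr)\;\leq\;m^* U
\end{equation*}
in $\Sub(A)$. Third, invoke functoriality of $\Sub(-)$ on $\Ho(\C)$: since $e\circ m=\id_A$ in $\Ho(\C)$, we have $m^*\circ e^*=(e\circ m)^*=\id_{\Sub(A)}$, so the left-hand side is just $\forall_e U$, yielding the claim.

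There is no real obstacle here; the only point worth underlining is that functoriality of $\Sub(-)$ only requires the identity $e\circ m=\id_A$ to hold in $\Ho(\C)$, which is exactly what the hypothesis provides, so no coherence issues arise despite working in an $\infty$-category. Note also that we do not need $m$ itself to be a monomorphism for the argument — the statement only involves $m^*$ acting on $\Sub(B)$, which is defined for any morphism.
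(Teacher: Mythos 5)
Your proof is correct and is essentially the paper's argument in mate form: the paper starts from the unit $U\leq\forall_m\,m^*U$ of $m^*\dashv\forall_m$, applies $\forall_e$, and collapses $\forall_e\forall_m$ to the identity by covariant functoriality of $\forall$ on $\Ho(\C)$, whereas you start from the counit $e^*(\forall_e U)\leq U$ of $e^*\dashv\forall_e$, apply $m^*$, and collapse $m^*e^*$ by contravariant functoriality of $(-)^*$. These are the same two-line adjunction computation, so nothing further is needed.
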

	\begin{proof}
		By adjunction we have $U\leq \fa_m\,m^*\,U$, and therefore we can argue
		\begin{align*}
			\fa_e\,U\leq \fa_e\,\fa_m\,m^*\,U\leq \fa_{e\circ m}\,m^*\,U\leq m^*\,U
		\end{align*}
		by functoriality of $\fa$ on $\Ho(\C)$.
	\end{proof}
	\begin{theorem}\label{thm:sub-join} Let $\C$ be a \lcc 
		$\infty$-category with subobject classifier $\tt:U\emb\Omega$. Then for every
		object $A\in\C$ the poset $\Sub(A)$ has finite joins.
	\end{theorem}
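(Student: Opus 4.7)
The plan is to construct the bottom element and binary joins of $\Sub(A)$ by impredicative encodings internal to $\C$, in direct analogy with the definitions $\bot := \fa P{:}\Omega\qdot P$ and $U \vee V := \fa P{:}\Omega\qdot (U \to P) \to (V \to P) \to P$ familiar from homotopy type theory. Let $\pi_A: A \times \Omega \to A$ and $\pi_\Omega: A \times \Omega \to \Omega$ denote the projections, set $P := \pi_\Omega^*\tt \in \Sub(A \times \Omega)$, and for $U \in \Sub(A)$ abbreviate $\hat U := \pi_A^* U$. Define
\begin{equation*}
\bot_A \;:=\; \fa_{\pi_A}(P), \qquad U \vee V \;:=\; \fa_{\pi_A}\bigl((\hat U \imp P) \imp ((\hat V \imp P) \imp P)\bigr)
\end{equation*}
in $\Sub(A)$, using Heyting implication $\imp$ in the Heyting semilattice $\Sub(A \times \Omega)$. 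Both expressions automatically land in $\Sub(A)$ since $\fa_{\pi_A}$ is the restriction of $(\pi_A)_*$ to subobject lattices.

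For the least-upper-bound property, fix $X \in \Sub(A)$ with classifying map $\chi_X : A \to \Omega$. The pair $s := (\id_A, \chi_X) : A \to A \times \Omega$ is a section of $\pi_A$, so by \cref{lem:not-retract} applied with $m = s$ and $e = \pi_A$ we obtain $\fa_{\pi_A}(Z) \leq s^*(Z)$ for every $Z \in \Sub(A \times \Omega)$. Since $s^*$ preserves finite meets and Heyting implication (the latter by \cref{lem:reind-exp}) and $s^* P = \chi_X^*\tt = X$, applying $s^*$ to the two defining formulas yields $X$ and $(U \imp X) \imp ((V \imp X) \imp X)$ respectively. Under the hypotheses $U \leq X$ and $V \leq X$ the second expression simplifies to $X$ (as $U \imp X = V \imp X = \top$), whence $\bot_A \leq X$ and $U \vee V \leq X$.

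It remains to verify $U \leq U \vee V$ and $V \leq U \vee V$; by symmetry and the adjunction $\pi_A^* \adj \fa_{\pi_A}$ this reduces to the identity $\hat U \leq (\hat U \imp P) \imp ((\hat V \imp P) \imp P)$ inside $\Sub(A \times \Omega)$, which follows from the standard chain $\hat U \wedge (\hat U \imp P) \leq P \leq (\hat V \imp P) \imp P$ and two transpositions. The main obstacle, and only non-formal ingredient of the entire argument, is the inequality $\fa_{\pi_A}(Z) \leq s^*(Z)$ supplied by \cref{lem:not-retract}; everything else amounts to internal Heyting-algebra reasoning inside the $0$-truncated posets $\Sub(-)$, which is what makes the impredicative encoding transport from HoTT into the $\infty$-categorical setting without homotopy-coherence subtleties.
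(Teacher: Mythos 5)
Your proposal is correct and follows essentially the same route as the paper's own proof: the same impredicative encodings $\bot=\fa_{\pi}(\pi_\Omega^*\tt)$ and the second-order join, the same use of \cref{lem:not-retract} applied to the section $\angs{\id_A,\chi_X}$ of the projection, and the same adjunction/Heyting-algebra manipulations. The only (immaterial) difference is that you write the join in curried form $(\hat U\imp P)\imp((\hat V\imp P)\imp P)$ where the paper uses the equivalent $(\hat U\imp P)\wedge(\hat V\imp P)\imp P$.
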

	\begin{proof}
		Given $A\in\C$ we claim that a least element of $\Sub(A)$ is given by
		\begin{equation*}
			\bot = \fa_{\pi_1}\,\pi_2^*\,\tt,
		\end{equation*}
		where $A\xleftarrow{\pi_1}A\times\Omega\xrightarrow{\pi_2}\Omega$ is a product 
		span. Let $U\in\Sub(A)$, and let $f:A\to\Omega$ with $f^*\tt=U$. Then we have 
		\begin{align*}
			\bot=\fa_{\pi_1}\,\pi_2^*\,\tt 
			&\leq \angs{\id_A,f}^*\,\pi_2^*\,\tt &\text{by \cref{lem:not-retract}}\\
			&\leq f^*\tt&\text{by functoriality of $(-)^*$}\\
			&=U\,.
		\end{align*}
		Given $U,V\in\Sub(A)$ we claim that a binary join is given by
		\begin{equation*}
			U\vee V = 
			\fa_{\pi_1}\bigl((\pi_1^*U\imp\pi_2^*\tt)\wedge(\pi_1^*V\imp\pi_2^*\tt)\imp\pi_2^*\tt\bigr).
		\end{equation*}
		The derivation 
		\begin{align*}
			&\quad \pi_1^*U\wedge(\pi_1^*U\imp\pi_2^*\tt)
			\quad\leq\quad\pi_2^*\tt\\
			\imp&\quad\pi_1^*U\wedge(\pi_1^*U\imp\pi_2^*\tt)\wedge(\pi_1^*V\imp\pi_2^*\tt)
			\quad\leq\quad\pi_2^*\tt\\
			\Leftrightarrow&\quad \pi_1^*U
			\quad\leq\quad
			(\pi_1^*U\imp\pi_2^*\tt)\wedge(\pi_1^*V\imp\pi_2^*\tt)\imp\pi_2^*\tt\\
			\Leftrightarrow&\quad U
			\quad\leq\quad
			\fa_{\pi_1}\bigl((\pi_1^*U\imp\pi_2^*\tt)
			\wedge(\pi_1^*V\imp\pi_2^*\tt)\imp\pi_2^*\tt\bigr)
		\end{align*}
		shows that $U$ is indeed smaller than $U\vee V$, and similarly for $V$. To show 
		that $U\vee V$ is a least upper bound let $W\in\Sub(A)$ with $U\leq W$ and 
		$V\leq W$, and let $g:A\to \Omega$ with $g^*\tt=W$. Then we have 
		\begin{empheq}{align*}
			&U\vee V \\
			&= \fa_{\pi_1}\bigl((\pi_1^*U\imp\pi_2^*\tt)\wedge (\pi_1^*V\imp\pi_2^*\tt)
			\imp\pi_2^*\tt\bigr) \\ 
			&\leq \angs{\id_A,g}^*\bigl((\pi_1^*U\imp\pi_2^*\tt)
			\wedge(\pi_1^*V\imp\pi_2^*\tt)\imp\pi_2^*\tt\bigr)
			&\text{by \cref{lem:not-retract}} \\ 
			&=(U\imp W)\wedge(V\imp W)\imp W &\text{since $(-)^*$ preserves $\wedge,\imp$}\\
			&=W 
		\end{empheq}
	\end{proof}
	
	\begin{remark}
		The argument in the previous proof is well known from second order logic, and in
		its categorical incarnation from \emph{tripos theory} \cite{hjp80,pitts81} and 
		elementary topos theory~\cite{boileau1981logique}. It
		works in general whenever we have a presheaf $\hyph:\bbc\op\to\HSLat$ of Heyting
		semilattices on a $1$-category with finite products, such that
		\begin{enumerate} 
			\item reindexing maps along product projections have right adjoints, and 
			\item $\hyph$ has a \emph{generic predicate}, \ie the category of elements of
			the underlying presheaf of sets of $\hyph$ has a weakly terminal object.
		\end{enumerate}
		(Note that we do \emph{not} require a Beck-Chevalley condition.)
		
		From the point of view of locally Cartesian closed categories we point out that the construction applies exponentiation and pushforward functors $f_*$ only to \emph{subobjects} rather than general morphisms.
	\end{remark}
	
	\section{Initial Objects}\label{sec:initial-objects}

	In this section we prove that every \lcc  $\infty$-category
	with subobject classifier has a strict initial object. 
	\begin{definition}\label{def:init}
		An \emph{initial} object in an $\infty$-category $\C$ is an object $0$ such that
		$\Map_\C(0,A)$ is contractible for all $A\in\C$. The initial object is called
		\emph{strict}, if $\C_{/0}$ is equivalent to the terminal $\infty$-category.
	\end{definition}
	The following theorem gives a characterization of initial objects.
	\begin{theorem}\label{thm:initial-equiv} 
		Let $\C$ be a \lcc 
		$\infty$-category and $I$ an object of $\C$. Then the following are equivalent.
		\begin{enumerate}
			\item $I$ is initial in $\C$.
			\item $\C_{/I}$ is equivalent to the terminal $\infty$-category.
			\item $\Sub(I)$ is equivalent to the terminal preorder.
		\end{enumerate}
	\end{theorem}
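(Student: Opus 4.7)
The plan is to establish the cycle $(1) \Rightarrow (3) \Rightarrow (2) \Rightarrow (1)$.

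For $(1) \Rightarrow (3)$, given $I$ initial and a monomorphism $m : U \emb I$, initiality produces a map $s : I \to U$, and then $m \circ s : I \to I$ is homotopic to $\id_I$ by contractibility of $\Map_\C(I, I)$; thus $s$ is a section of $m$. Since $m$ is a monomorphism, postcomposition with $m$ has subterminal fibers, so $\id_U$ and $s \circ m$ (both of which postcompose with $m$ to give $m$) are homotopic. Combined, these show $m$ is an equivalence, so $\Sub(I) \simeq *$. (This implication does not use local Cartesian closure.) For $(2) \Rightarrow (1)$, given $B \in \C$ we identify $\Map_\C(I, B)$ with the space of sections $\Map_{\C_{/I}}(\id_I, \pi_1)$ of $\pi_1 : I \times B \to I$; if $\C_{/I}$ is terminal this mapping space is contractible, so $I$ is initial.

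The key implication is $(3) \Rightarrow (2)$, which we prove via the object of contractibility. Since slices of locally Cartesian closed $\infty$-categories are locally Cartesian closed, for any $f : A \to I$ in $\C_{/I}$ we can form $\isContr_{\C_{/I}}(f) \in \C_{/I}$. By \cref{prop:iscontr-subterm} applied inside $\C_{/I}$, this object is subterminal there; and since the terminal of $\C_{/I}$ is $\id_I$, being subterminal means being a monomorphism into $I$ in $\C$, i.e.\@ an element of $\Sub(I)$. The hypothesis $\Sub(I) \simeq *$ then forces $\isContr_{\C_{/I}}(f) \simeq \id_I$, and \cref{prop:iscontr-term} applied inside $\C_{/I}$ yields that $f$ is terminal in $\C_{/I}$, i.e.\@ $f$ is an equivalence in $\C$. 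Since every object of $\C_{/I}$ is equivalent to the terminal $\id_I$ and $\Map_{\C_{/I}}(-, \id_I) \simeq *$, we conclude $\C_{/I}$ is terminal.

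The main obstacle is precisely $(3) \Rightarrow (2)$: triviality of the subobject lattice $\Sub(I)$ does not obviously preclude the existence of objects in $\C_{/I}$ whose structure maps fail to be monomorphisms. The object of contractibility is the crucial tool bridging this gap, internalizing the property ``is an equivalence'' as a subterminal object and thereby reducing the general question about $\C_{/I}$ to a question about $\Sub(I)$.
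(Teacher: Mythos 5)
Your proof is correct and follows essentially the same route as the paper: the same cycle $(1)\Rightarrow(3)\Rightarrow(2)\Rightarrow(1)$, with the key step $(3)\Rightarrow(2)$ handled identically via the object of contractibility in $\C_{/I}$. The only (minor) deviation is in $(2)\Rightarrow(1)$, where you identify $\Map_\C(I,B)$ with the space of sections of $I\times B\to I$ instead of passing through the exponential $B^I=\Pi_I I^*B$ as the paper does; both work, and yours avoids using local Cartesian closure for that step.
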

	\begin{proof}
		Evidently (1) implies (3) since every subobject of an initial object has to be 
		trivial.
		
		Conversely, if $\Sub(I)\simeq 1$ then for any $X \to I$ the subobject $\isContr_I(X) \emb I$ is maximal, meaning that $X \to I$ is an
		equivalence. This shows that (3) implies (2).
		
		Finally, to show that $I$ is initial we have to show that the mapping space 
		$\Map_\C(I,X)$ is terminal for all $X\in\C$. Since $\Map_\C(I,X)\simeq\Map_\C(1,X^I)$ and
		$\Map_\C(1,-)$ preserves finite limits, it is enough to show that $X^I$ is
		terminal in $\C$. Since $X^I = \Pi_I I^* X$ and $\Pi_I:\C_{/I}\to\C$
		preserves limits, it is enough to show that $I^*X$ is terminal in $\C_{/I}$. 
		This follows from (2).
	\end{proof}
	\begin{remark}\label{rem:strict}
		Implication (1) to (2) of the theorem tells us in particular that 
		initial objects in locally Cartesian closed \icats are always \emph{strict} (\cref{def:init}).
	\end{remark}
	\begin{corollary}\label{cor:omega-lccc-has-init} Let $\C$ be a \lcc $\infty$-category with subobject classifier. Then $\C$ has a strict
		initial object. 
	\end{corollary}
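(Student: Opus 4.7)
The plan is to extract an initial object directly from the subobject structure on $1$ provided by \cref{thm:sub-join}. I would pick a monomorphism $0\emb 1$ representing the bottom element $\bot\in\Sub(1)$, and argue that $0$ is initial; strictness will then come for free from \cref{rem:strict}.

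To verify initiality, I would appeal to the equivalence (1)$\Leftrightarrow$(3) of \cref{thm:initial-equiv}, which reduces the task to showing that $\Sub(0)$ is the terminal preorder, \ie that every monomorphism $m\colon U\emb 0$ is an equivalence. Given such $m$, the composite $U\emb 0\emb 1$ is again a monomorphism and so represents some subobject $[U]\in\Sub(1)$. Since this representative factors through $0$, we have $[U]\leq\bot$ in $\Sub(1)$, which combined with $\bot\leq[U]$ (the defining property of $\bot$) forces $[U]=\bot$ by antisymmetry. Consequently $U$ and $0$ are equivalent as subterminals of $1$, and since parallel maps between subterminals are homotopic (\cref{lem:subterm-equiv}), the map $m$ itself must be the promised equivalence.

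I do not anticipate a serious obstacle here: the substantive work has already been carried out in \cref{thm:sub-join}, and what remains is essentially bookkeeping — unpacking the definition of $\bot$ and invoking the right characterization from \cref{thm:initial-equiv}. The only temptation to avoid is trying to verify the universal property (1) of an initial object directly rather than going via the much easier posetal condition (3) on subobjects; the latter is the whole point of having set things up so that \cref{thm:sub-join} applies.
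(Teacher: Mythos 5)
Your proposal is correct and follows essentially the same route as the paper: take the least subobject $0\emb 1$ provided by \cref{thm:sub-join}, observe that $\Sub(0)$ is trivial because any subobject of a least subobject is again least, and conclude via \cref{thm:initial-equiv} and \cref{rem:strict}. The only difference is that you spell out the ``any subobject of a least subobject is trivial'' step in more detail than the paper does, which is harmless.
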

	\begin{proof}
		By \cref{thm:sub-join}, the terminal object of $\C$ has a least subobject
		$0\emb 1$. Since any subobject of a least subobject is trivial we
		have $\Sub(0)\simeq 1$, and \cref{thm:initial-equiv} together with
		\cref{rem:strict} imply that $0$ is
		a strict initial object.
	\end{proof}
	
	\section{Binary Coproducts}\label{sec:bin-coprods}
	
	In this section we prove that every \lcc  $\infty$-category with subobject classifier $\Omega$ has finite coproducts by using the fact that the subobject lattices have finite joins (\cref{thm:sub-join}). To motivate our proof, we 
	start by discussing the $1$-categorical case.
	
	According to Johnstone~\cite[A2.2]{elephant1}, the first proofs of the existence 
	of finite colimits in elementary toposes were given by 
	Mikkelsen and 
	Paré~\cite{mikkelsen1972finite,pare1974colimits,mikkelsen1976lattice}.  
	Mikkelsen's proof does not seem to have been published. Par{\'e} proved -- using 
	Beck's theorem -- that in any elementary $1$-topos $\E$ the power object functor 
	$\Omega^{(-)}: \E\op \to \E$ is monadic, which implies that $\E\op$ has finite 
	limits as a category of Eilenberg-Moore algebras over a finite-limit category.
	
	Although there is an $\infty$-categorical analogue of Beck's theorem
	\cite[Theorem~4.7.3.5]{lurie2017ha}, this proof cannot be generalized as the
	corresponding functor of $\infty$-categories $\Omega^{(-)}: \C\op \to \C$ is not
	monadic and in fact not even conservative for the most simple examples:
	if $\C= \S$ then $\Omega = \{0,1\}$, the two element set, and the functor $\Omega^{(-)}: \S\op \to \S$
	takes every connected space to $\Omega$, and every map between connected spaces
	to an equivalence.
	
	Our proof of the existence of binary coproducts is based on an `internal-language proof' in $1$-toposes that avoids the monadicity theorem and was given as an 
	Exercise in~\cite[Exercise II.5.]{lambekscott86}. The idea is to 
	`carve out' the coproduct $A+B$ as subobject of $\Omega^A\times\Omega^B$. In 
	trying to adapt this proof to $\infty$-categories, we are met with two 
	obstacles:
	\begin{enumerate}
		\item While in a $1$-topos every object $A$ embeds into its power object 
		$\Omega^A$, this cannot work in higher toposes as, by \cref{thm:omega-trunc},
		$\Omega$ -- and therefore $\Omega^A$ and all its subobjects -- are 
		$0$-truncated. 
		\item To verify the universal property, the internal-language proof exhibits the unique arrow by first defining a (monic) binary relation,
		and then showing that it is single-valued and total. This kind of argument
		cannot work in the higher setting since it relies on the fact that the 
		graph $\angs{1,f}:A\to A\times B$ of a map $f:A\to B$ is always monic, which is 
		not the case \eg in $\S$.
	\end{enumerate}
	To overcome the first hurdle, we replace the $\Omega^A$ 
	in the construction with an object $\overline{A}$ known as 
	\emph{partial map classifier} or \emph{partial map representer}~\cite[pg.~101]{elephant1}
	in $1$-topos theory (\cref{lem:disj-emb}). To address the second point, we replace the classical internal-logic proof by an argument which is inspired by homotopy type theory (\cref{lem:coprod-from-join}),
	and which crucially relies on the technique of the object of contractibility, which we reviewed in \cref{suse:contractibility}.
	
	\begin{lemma}\label{lem:disj-emb}
		Let $A$ be an object in a \lcc  $\infty$-category $\C$ with
		subobject classifier $\tt:1\to\Omega$. Then there exists an object $\overline A$
		admitting disjoint monomorphisms of $A$ and $1$, \ie there exists a pullback
		square
		\begin{equation*}
			\begin{tikzcd}
				0
				\ar[r,tail]
				\ar[d,""',tail]
				&	1
				\ar[d,"",tail]
				\\	A
				\ar[r,"",tail]
				&	\overline{A}
		\end{tikzcd}\end{equation*}
		where all sides are monomorphisms and the upper left object is initial.
	\end{lemma}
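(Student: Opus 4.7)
The plan is to construct $\overline{A}$ as the codomain of the pushforward $\tt_*(A) \in \C_{/\Omega}$, where $A$ is viewed as an object of $\C_{/1} \simeq \C$. Intuitively this gives an object $p: \overline{A} \to \Omega$ whose fiber over $\tt$ is $A$ and whose fiber over the other truth value $\ff$ is terminal; the two disjoint monomorphisms will then be obtained by pulling $p$ back along $\tt$ and $\ff$.

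First I would produce $\ff$: by \cref{cor:omega-lccc-has-init} the category $\C$ has a strict initial object $0$, and the universal property of the subobject classifier yields a unique $\ff: 1 \to \Omega$ whose pullback of $\tt$ is $0 \rightarrowtail 1$. Both $\tt$ and $\ff$ are monomorphisms because the diagonal of any global element of a $0$-truncated object is an equivalence, and $\Omega$ is $0$-truncated by \cref{thm:omega-trunc}. The fiber over $\tt$ is easy: \cref{lem:emb-shriek-corefl}\ref{lem:emb-shriek-corefl-refl} applied to the mono $\tt$ gives $\tt^*p \simeq \tt^*\tt_*(A) \simeq A$, and the resulting pullback exhibits a monomorphism $\iota_A: A \rightarrowtail \overline{A}$.

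The fiber over $\ff$ is the crucial step. Applying the Beck--Chevalley equivalence of \cref{lem:BC} to the pullback square
\[
\begin{tikzcd}[row sep=small]
0 \ar[r,"h"] \ar[d,"k"'] & 1 \ar[d,"\ff"] \\
1 \ar[r,"\tt"'] & \Omega
\end{tikzcd}
\]
yields $\ff^*\tt_*(A) \simeq h_*k^*(A)$. Here strictness of the initial object (\cref{rem:strict}) ensures $\C_{/0}$ is equivalent to the terminal $\infty$-category, so $k^*(A)$ is the unique object of $\C_{/0}$ and $h_*$ sends it to $1 \in \C$. Thus $\ff^*p \simeq 1$, and pulling back the mono $\ff$ along $p$ produces a monomorphism $\iota_1: 1 \rightarrowtail \overline{A}$.

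It remains to identify the pullback $A \times_{\overline{A}} 1$ of $\iota_A$ and $\iota_1$. Pasting the two defining pullback squares exhibits this pullback as $1 \times_\Omega 1$ along $\tt$ and $\ff$, which is $0$ by construction of $\ff$. The resulting square therefore has initial upper-left corner, and its other two sides are monomorphisms since any morphism out of a strict initial object is monic. I expect the main technical obstacle to lie in the fiber computation $\ff^*p \simeq 1$, whose proof crucially combines Beck--Chevalley with strictness of the initial object -- two features that replace the classical $1$-categorical proof based on embedding $A$ into its power object.
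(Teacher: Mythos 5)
Your proposal is correct and follows essentially the same route as the paper's proof: defining $\overline{A}$ as the codomain of $\tt_*a$, using the reflection $\tt^*\tt_*\simeq\id$ for the $A$-fiber, and combining Beck--Chevalley with strictness of $0$ for the fiber over $\ff$. The only difference is presentational -- the paper organizes the final identification of the intersection as the top face of a pullback cube in the arrow category, while you paste the classifying squares directly -- and your alternative justifications (monomorphy of $\tt,\ff$ via $0$-truncatedness of $\Omega$, monomorphy of maps out of $0$ via strictness) are both valid.
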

	\begin{proof}
		Let $a:A\to 1$ be the terminal projection, and define 
		$(\overline{a}:\overline{A}\to\Omega) = \tt_*a$. Then by \cref{lem:emb-shriek-corefl}(\ref{lem:emb-shriek-corefl-refl}) we have 
		$\tt^*\overline{a}\simeq a$, \ie there is a pullback square 
		\begin{equation*}\begin{tikzcd}
				A
				\ar[r,tail]
				\ar[d,"a"]
				&	\overline{A}
				\ar[d,"\overline{a}"]
				\\	1
				\ar[r,"\tt",tail]
				&	\Omega
			\end{tikzcd}.
		\end{equation*}
		The lower map is a monomorphism by \cref{lem:section-mono}, and the
		upper map is a monomorphism by pullback stability.
		Now let 
		\begin{equation*}\begin{tikzcd}
				0
				\ar[r,tail,"e"']
				\ar[d,tail,"e"']
				&	1
				\ar[d,"\ff",tail]
				\\	1
				\ar[r,"\tt",tail]
				&	\Omega
		\end{tikzcd}\end{equation*}
		be the  classifying pullback square of the least subobject $0\emb 1$ of $1$, 
		such that $\ff:1\emb\Omega$ represents the truth value `false'. Again, $\ff$ 
		is a monomorphism by \cref{lem:section-mono}. The upper and left maps 
		can be chosen to be equal since $\Map_\C(0,1)$ is contractible.

		Forming the pullback in the arrow category $\Fun(\Delta^1,\C)$ we obtain a 
		commutative cube
		\begin{equation*}
			\begin{tikzcd}[sep = small]
				&   I       \ar[rr]
				\ar[dd,"i" pos=.2]
				\ar[dl]
				&&  J       \ar[dl,"k" pos=.7]
				\ar[dd,"j"]
				\\  A       \ar[dd,"a"']
				\ar[rr,crossing over]
				&&  \overline{A} 
				\\& 0       \ar[rr,"e"' pos=.2]
				\ar[dl,"e"' pos=.3]
				&&  1       \ar[ld,"\ff"]
				\\  1       \ar[rr,"\tt"]
				&&  \Omega  \ar[from=uu,crossing over,"\overline{a}" pos=.2]
			\end{tikzcd}
		\end{equation*}
		in which the left and right sides are pullbacks, since pullbacks are computed
		pointwise in functor categories. We already know that the front and bottom
		squares are pullbacks, and conclude that the remaining two are as well by the
		pullback lemma. The map $i$ is an equivalence since $\C_{/0}\simeq 1$ by
		\cref{thm:initial-equiv}. Furthermore we have 
		\begin{equation*}
			j \simeq \ff^*(\tt_*a)\simeq e_*(e^*a)\simeq e_*i
		\end{equation*}
		by the Beck--Chevalley condition (\cref{lem:BC}), which means that $j$ is
		an equivalence as well since terminal objects are preserved by right adjoints.
		
		Finally, $k$ is a monomorphism as a pullback of $\ff$ and the desired square 
		is recovered on the top of the cube.
	\end{proof}
	
	\begin{lemma}\label{lem:iscontr-subterm}
		Let $U,V\in\Sub(1)$ be subterminals in a \lcc 
		$\infty$-category $\C$, such that $U\vee V=\top$ in $\Sub(1)$. An object 
		$A\in\C$ is contractible whenever $U^*A$ is contractible in $\C_{/U}$ and 
		$V^*A$ is contractible in $\C_{/V}$.
	\end{lemma}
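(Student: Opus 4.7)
The plan is to use the object of contractibility from \cref{suse:contractibility} to reduce the claim to a purely order-theoretic statement in $\Sub(1)$. By \cref{prop:iscontr}(\ref{prop:iscontr-term}), $A$ is terminal if and only if $\isContr(A)$ is terminal, and by \cref{prop:iscontr}(\ref{prop:iscontr-subterm}) the object $\isContr(A)$ is a subterminal, so it suffices to show that the subobject $\isContr(A)\emb 1$ equals the top element $\top\in\Sub(1)$.

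The key computation uses the pullback-stability of $\isContr$. By \cref{prop:iscontr}(\ref{prop:iscontr-pullback}) applied with $B=U$, we have $U^*\isContr(A)\simeq \isContr(U^*A)$ in $\C_{/U}$, and since $\C_{/U}$ is itself locally Cartesian closed, the hypothesis that $U^*A$ is contractible together with \cref{prop:iscontr}(\ref{prop:iscontr-term}) forces $\isContr(U^*A)$ to be terminal in $\C_{/U}$. Since both $U\emb 1$ and $\isContr(A)\emb 1$ are monomorphisms, the pullback $U^*\isContr(A)\to U$ represents the meet $U\wedge\isContr(A)$ as a subobject of $U$; its being terminal in $\C_{/U}$ therefore amounts to the inequality $U\leq\isContr(A)$ in $\Sub(1)$.

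Repeating the argument with $V$ in place of $U$ yields $V\leq\isContr(A)$, and combining these using the assumption $U\vee V=\top$ from \cref{thm:sub-join} gives $\top=U\vee V\leq\isContr(A)$, hence $\isContr(A)=\top$. Thus $\isContr(A)$ is terminal and $A$ is contractible, as required.

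The proof is essentially a transcription of the order-theoretic fact that a subterminal which restricts to the top subterminal on each piece of a binary cover is itself the top; the only substantive input is the compatibility of $\isContr(-)$ with pullback along subterminals, which is exactly what \cref{prop:iscontr}(\ref{prop:iscontr-pullback}) provides. I do not anticipate a real obstacle; the main conceptual point is recognizing that the object of contractibility allows one to transfer a `higher' contractibility question into the $(-1)$-truncated setting of subobject lattices, where finite joins are available by \cref{thm:sub-join}.
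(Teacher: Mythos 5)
Your proof is correct and follows essentially the same route as the paper: reduce to showing $\isContr(A)=\top$ in $\Sub(1)$ via \cref{prop:iscontr}(\ref{prop:iscontr-term}), establish $U\leq\isContr(A)$ and $V\leq\isContr(A)$ using the pullback-compatibility of $\isContr$ from \cref{prop:iscontr}(\ref{prop:iscontr-pullback}), and conclude from $U\vee V=\top$. The only quibble is attributional: the equality $U\vee V=\top$ is a hypothesis of the lemma rather than a consequence of \cref{thm:sub-join}.
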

	\begin{proof}
		It is sufficient to show $\isContr(A)\geq U$ and $\isContr(A)\geq V$ in
		$\Sub(1)$, or equivalently that $U^*\isContr(A)\simeq 1$ and
		$V^*\isContr(A)\simeq 1$ in $\C_{/U}$ and $\C_{/V}$, respectively. This follows
		from the assumption together with \cref{prop:iscontr}(\ref{prop:iscontr-term})
		since we have $U^*(\isContr(A))\simeq\isContr(U^*A)$ and
		$V^*(\isContr(A))\simeq\isContr(V^*A)$ by
		\cref{prop:iscontr}(\ref{prop:iscontr-pullback}).
	\end{proof}
	
	\begin{lemma}\label{lem:coprod-from-join}
		Let $U\stackrel{i}{\rightarrowtail}A\stackrel{j}{\leftarrowtail}V$
		be a cospan of monomorphisms in a \lcc \icat $\C$, such that $U\wedge V$ is a
		least subobject of $A$, and $\top$ is a least upper bound of $U$ and $V$ in
		$\Sub(A)$. Then $i$ and $j$ exhibit $A$ as a disjoint coproduct of $U$ and $V$.
	\end{lemma}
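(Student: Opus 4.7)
The plan is to show the stronger descent equivalence $(i^*, j^*): \C_{/A} \to \C_{/U} \times \C_{/V}$, from which both disjointness and the coproduct universal property follow. Disjointness is immediate: the squares $U \times_A U \simeq U$ and $V \times_A V \simeq V$ are pullbacks because $i$ and $j$ are monomorphisms, and $U \times_A V \simeq 0$ is a pullback by the hypothesis $U \wedge V = 0$.

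For the descent equivalence, I first record two key identities. Since $i, j$ are monomorphisms, \cref{lem:emb-shriek-corefl}(\ref{lem:emb-shriek-corefl-refl}) gives $i^* i_* \simeq \id$ and $j^* j_* \simeq \id$. The Beck--Chevalley condition (\cref{lem:BC}), applied to the pullback square witnessing $U \wedge V = 0$ and combined with the fact that $\C_{/0}$ is terminal (by strictness of the initial object, \cref{rem:strict}), yields $i^* j_* \simeq \top_U$ and $j^* i_* \simeq \top_V$, where these denote the constant functors to the terminal objects of $\C_{/U}$ and $\C_{/V}$. Defining the candidate inverse $F(p, q) := i_* p \times_A j_* q$, these identities yield $(i^*, j^*) \circ F \simeq \id$ by direct computation.

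The main step is to show $F \circ (i^*, j^*) \simeq \id$, \ie that for every $R \in \C_{/A}$ the canonical map
\[
\eta_R : R \to i_* i^* R \times_A j_* j^* R
\]
is an equivalence. Using the triangle identities together with the identities above, one checks that $i^* \eta_R$ and $j^* \eta_R$ are equivalences (both reduce, up to the projections $i^* R \times_U \top_U \simeq i^* R$ and $\top_V \times_V j^* R \simeq j^* R$, to identity maps). To promote this to an equivalence of $\eta_R$ itself, I view $\eta_R$ as an object of the slice $\C_{/S}$ with $S = i_* i^* R \times_A j_* j^* R$ and invoke \cref{lem:iscontr-subterm} in that slice, using the reindexed subterminals $s^* U, s^* V \in \Sub(S)$ where $s: S \to A$ is the structure map. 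The crucial input is that $s^*$ preserves finite joins of subobjects, so that $s^* U \vee s^* V = s^*(U \vee V) = \top$ in $\Sub(S)$. While this would be automatic in a topos, here I rely on the impredicative construction of \cref{thm:sub-join}: the join is built from $\wedge$, $\imp$, $\pi_2^* \tt$, and $\fa_{\pi_1}$, each of which is preserved by reindexing via \cref{lem:reind-exp} and Beck--Chevalley for $\fa$.

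Once the equivalence $(i^*, j^*): \C_{/A} \simeq \C_{/U} \times \C_{/V}$ is established, the coproduct universal property follows formally: for any $X \in \C$,
\[
\Map_\C(A, X) \simeq \Map_{\C_{/A}}(\id_A, A^* X) \simeq \Map_{\C_{/U}}(\id_U, U^* X) \times \Map_{\C_{/V}}(\id_V, V^* X) \simeq \Map_\C(U, X) \times \Map_\C(V, X),
\]
naturally in $X$, with the equivalence induced by precomposition with $i$ and $j$. The principal obstacle is the main step, where the preservation of subobject joins under reindexing---specifically enabled by the impredicative formula of \cref{thm:sub-join}---is what allows us to bootstrap the pointwise triviality of $\eta_R$ on $U$ and $V$ into a global equivalence.
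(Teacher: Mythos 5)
Your proof is correct, but it takes a genuinely different route from the paper. The paper first reduces to $A=1$ by slicing, and then verifies the coproduct universal property directly on mapping spaces: it rewrites the relevant cospan as $1\to X^U\times X^V\leftarrow X$ and shows its pullback is terminal by applying \cref{lem:iscontr-subterm}, after computing $U^*(X^U)\simeq U^*X$ and $U^*(X^V)\simeq 1$ via the reflection $U^*\adj U_*$ and Beck--Chevalley over $U\wedge V\simeq 0$. You instead establish the full descent equivalence $(i^*,j^*):\C_{/A}\simeq\C_{/U}\times\C_{/V}$ by exhibiting $F(p,q)=i_*p\times_A j_*q$ as an inverse, detecting that the unit $\eta_R$ is an equivalence by restricting to $s^*U$ and $s^*V$ and again invoking \cref{lem:iscontr-subterm} (in the slice over $S$). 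Both arguments hinge on the same two ingredients -- the identities $i^*i_*\simeq\id$, $i^*j_*\simeq\top_U$ and the local-to-global passage via the object of contractibility -- but your version directly yields the stronger conclusion of \cref{cor:descent coproducts} without a separate appeal to universality, which is a nice economy. One correction: your worry about $s^*$ preserving the join $U\vee V=\top$ is unfounded, and your fix via the impredicative formula of \cref{thm:sub-join} is both unnecessary and technically unavailable here, since \cref{lem:coprod-from-join} does not assume a subobject classifier. The preservation is automatic: $s^*:\Sub(A)\to\Sub(S)$ has the right adjoint $\fa_s$, so if $W\geq s^*U$ and $W\geq s^*V$ then $\fa_s W\geq U\vee V=\top$, whence $W\geq s^*\top=\top_S$. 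With that repair your argument is complete; the remaining glossed steps (that $i^*\eta_R$ is an equivalence via the triangle identities, that $(s^*U)^*(\eta_R)$ agrees with $i^*\eta_R$, and that $U\wedge V$ is initial by $\Sub(U\wedge V)\simeq 1$ and \cref{thm:initial-equiv}) all check out.
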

	\begin{proof}
		Since the forgetful functor $A_!:\C_{/A}\to\C$ preserves coproducts as a left
		adjoint we may w.l.o.g.\ work in the slice category and thus assume that $A=1$.
		
		To show that we have a coproduct, we have to check that for all objects $X\in\C$ and arrows $f:U\to X$, $g:V\to X$,
		the pullback of the cospan
		\begin{equation*}\begin{tikzcd}
				&	\Map_\C(1,X)
				\ar[d,"\angs{\Map_\C(i,X),\Map_\C(j,X)}"]
				\\	1
				\ar[r,"\angs{f,g}"]
				&	\Map_\C(U,X)\times\Map_\C(V,X)
		\end{tikzcd}\end{equation*}
		in $\S$ is contractible. This cospan is equivalent to the image of the cospan 
		\begin{equation}\label{eq:cospan-c}
			\begin{tikzcd}
				&	X
				\ar[d,"\angs{c,d}"]
				\\	1
				\ar[r,"\angs{f,g}"]
				&	X^U\times X^V
		\end{tikzcd}\end{equation}
		under $\Map_\C(1,-)$, where $c$ and $d$ are exponential transposes of projection
		maps. Since $\Map_\C(1,-)$ preserves limits, it suffices to show that the pullback of
		the latter cospan is terminal in $\C$. By \cref{lem:iscontr-subterm} and
		since pullback functors preserve limits, it suffices to show that the images
		of~\eqref{eq:cospan-c} under $U^*$ and $V^*$ are contractible in $\C_{/U}$ and
		$\C_{/V}$, respectively. By symmetry, it is enough to consider the first case. We
		have 
		\begin{equation}\label{eq:usxu}
			U^*(X^U) = U^* (U_* (U^* X))\simeq U^*X
		\end{equation}
		since $U^*\adj U_*$ is a reflection (\cref{lem:emb-shriek-corefl}), and 
		by applying the Beck--Chevalley condition for the pullback square
		$
		\begin{tikzcd}[sep = 6]
			0
			\ar[r,"i"']
			\ar[d,"i"']
			&	V
			\ar[d,""]
			\\	U
			\ar[r,""]
			&	1
		\end{tikzcd}
		$
		we get 
		\begin{equation*}
			U^*(X^V) 
			= U^*(V_*(V^*X))
			\simeq i_* (i^* (V^* X)) 
			\simeq i_*1  
			\simeq 1,
		\end{equation*}
		since all objects over $0$ are terminal (\cref{thm:initial-equiv}).
		Furthermore one can show that modulo the equivalence~\eqref{eq:usxu} we have 
		$U^*(c)\simeq\id$, and since $U^*$ preserves limits we conclude
		\begin{equation*}
			U^*\left(
			\begin{tikzcd}[sep = small]
				&	X
				\ar[d,"\angs{c,d}"]
				\\	1
				\ar[r,]
				&	X^U\times X^V
			\end{tikzcd}
			\right) \simeq 
			\left(
			\begin{tikzcd}[sep = small]
				&	U^*X
				\ar[d,"\id"]
				\\	U^*1
				\ar[r]
				&	U^*X
			\end{tikzcd}
			\right).
		\end{equation*}
		The pullback of the right hand cospan is contractible in $\C_{/U}$ since 
		$U^*1$ is,  and equivalences are stable under pullback.
		
		Disjointness is clear since the injections are monic by assumption, and their
		pullback coincides with the meet $U\wedge V=\bot$ in $\Sub(A)$, which is initial
		by~\cref{thm:initial-equiv}.
	\end{proof}
	\begin{theorem} \label{the:coproducts}
		Let $\C$ be a \lcc  $\infty$-category with a subobject
		classifier. Then $\C$ has disjoint binary coproducts.
	\end{theorem}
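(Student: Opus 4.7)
The plan is to adapt the construction from Exercise II.5 of Lambek-Scott, building the coproduct of $A$ and $B$ as a join of subobjects inside $\overline{A}\times\overline{B}$, where $\overline{A}$ and $\overline{B}$ are the partial map classifiers supplied by \cref{lem:disj-emb}. Given $A,B\in\C$, I first invoke \cref{lem:disj-emb} twice to obtain disjoint pairs of monomorphisms $A\emb\overline{A}\hookleftarrow 1$ and $B\emb\overline{B}\hookleftarrow 1$. Using the canonical equivalences $A\simeq A\times 1$ and $B\simeq 1\times B$, composition with products of monomorphisms gives two monomorphisms
\[
\iota_A : A \emb \overline{A}\times\overline{B}
\qquad\text{and}\qquad
\iota_B : B \emb \overline{A}\times\overline{B}
\]
(products of monomorphisms are monomorphisms because the diagonal of $m\times n$ factors as the product $\delta_m\times\delta_n$).

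Next I will show that $\iota_A$ and $\iota_B$ are disjoint as subobjects of $\overline{A}\times\overline{B}$. Their meet is computed as the product of the meets $A\wedge 1$ in $\Sub(\overline{A})$ and $1\wedge B$ in $\Sub(\overline{B})$, both of which are initial by \cref{lem:disj-emb}. Since initial objects are strict (\cref{rem:strict}), the product $0\times 0$ is initial, so $\iota_A\wedge\iota_B=\bot$ in $\Sub(\overline{A}\times\overline{B})$. Now I apply \cref{thm:sub-join} to form $C:=\iota_A\vee\iota_B$ in $\Sub(\overline{A}\times\overline{B})$, so that $\iota_A$ and $\iota_B$ factor through monomorphisms $i:A\emb C$ and $j:B\emb C$.

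It remains to verify the hypotheses of \cref{lem:coprod-from-join} for the cospan $A\stackrel{i}{\emb}C\stackrel{j}{\hookleftarrow}B$. The key observation is that the assignment $\Sub(C)\to\Sub(\overline{A}\times\overline{B})$ given by postcomposition with $C\emb\overline{A}\times\overline{B}$ is an order isomorphism onto the downset of $C$, and in particular preserves and reflects finite meets and finite joins. Thus $i\wedge j$ in $\Sub(C)$ corresponds to $\iota_A\wedge\iota_B=\bot$, and $i\vee j$ corresponds to $\iota_A\vee\iota_B=C$, which is $\top$ in $\Sub(C)$. By \cref{lem:coprod-from-join}, $i$ and $j$ exhibit $C$ as a disjoint coproduct $A+B$.

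The main conceptual obstacle is already resolved by the authors' framework: the classical argument needs the embedding into a power object (which fails in the untruncated setting) and then a single-valued total relation argument (which fails because graphs of maps are not monic in general). Replacing $\Omega^A$ by $\overline{A}$ addresses the first issue, and the second is bypassed entirely by \cref{lem:coprod-from-join}, whose proof relies on the object of contractibility to reduce universal properties to statements about subterminals. With those tools in hand, the remaining work is the bookkeeping above, for which the only subtlety is being careful that meets and joins computed in $\Sub(C)$ agree with those computed in $\Sub(\overline{A}\times\overline{B})$ under the downset embedding.
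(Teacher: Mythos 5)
Your proof is correct and follows essentially the same route as the paper: embed $A$ and $B$ disjointly into $\overline{A}\times\overline{B}$ via \cref{lem:disj-emb}, take $C=A\vee B$ there using \cref{thm:sub-join}, and conclude with \cref{lem:coprod-from-join}. The only difference is presentational -- the paper packages monicity and disjointness at once as a ``transposed product'' of the two pullback squares, while you verify them separately and are more explicit than the paper about why meets and joins agree under the downset embedding $\Sub(C)\hookrightarrow\Sub(\overline{A}\times\overline{B})$, which is a worthwhile clarification.
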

	\begin{proof}
		Let $A$ and $B$ be objects of $\C$. By \cref{lem:coprod-from-join} it is
		sufficient to find an object $C$ admitting monomorphisms $A\emb C$ and $B\emb C$
		such that $A\wedge B=\bot$ and $A\vee B=\top$ in $\Sub(C)$.
		
		By \cref{lem:disj-emb} we have pullback squares
		\begin{equation*}
			\begin{tikzcd}
				0
				\ar[r,tail]
				\ar[d,""',tail]
				&	1
				\ar[d,"",tail]
				\\	A
				\ar[r,"",tail]
				&	\overline{A}
			\end{tikzcd}
			\qquad\qquad
			\begin{tikzcd}
				0
				\ar[r,tail]
				\ar[d,""',tail]
				&	1
				\ar[d,"",tail]
				\\	B
				\ar[r,"",tail]
				&	\overline{B}
			\end{tikzcd}
		\end{equation*}
		Forming the `transposed product'
		\[
		\Bigg(
		\begin{tikzcd}[sep = small]
			0 	\rar[tail]
			\dar[tail] 
			& 	1	\dar[tail]
			\\  A	\rar[tail]
			& 	\overline{A}
		\end{tikzcd}
		\Bigg)
		\times
		\Bigg(
		\begin{tikzcd}[sep = small]
			0 	\rar[tail]
			\dar[tail] 
			& 	B	\dar[tail]
			\\  1	\rar[tail]
			& 	\overline{B}
		\end{tikzcd}
		\Bigg)
		\;=\;
		\Bigg(
		\begin{tikzcd}[sep = small]
			0 	\rar[tail]
			\dar[tail] 
			& 	B	\dar[tail]
			\\  A	\rar[tail]
			& 	\overline{A}\times\overline{B}
		\end{tikzcd}
		\Bigg)
		\]
		of these two pullbacks yields a pullback square exhibiting $A$ and $B$ as 
		disjointly embedded in an object $\overline{A}\times\overline{B}$. The desired 
		cospan $A\emb C \leftarrowtail B$ is obtained by setting $C= A\vee B$ in
		$\Sub(\overline{A}\times\overline{B})$.
	\end{proof}
	
	The following summarizes all our results.
	
	\begin{theorem} \label{cor:descent coproducts}
		Let $\{A_k\}_{k \in I}$ be a finite family of objects in a \lcc \icat $\C$ with subobject classifier. Then the coproduct $\coprod_{k \in I} A_k$ exists, and the inclusion maps $i_k: A_k \to \coprod_{k \in I} A_k$ give rise to an equivalence of $\infty$-categories
		$$(i_k^*)_{k \in I}:\C_{/\coprod_{k \in I} A_k} \to \prod_{k \in I} \C_{/A_k}.$$
	\end{theorem}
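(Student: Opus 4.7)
The plan is to proceed by induction on $|I|$, reducing the general claim to the binary case. The base case $|I|=0$ uses the strict initial object from \cref{cor:omega-lccc-has-init} together with $\C_{/0}\simeq 1$ from \cref{thm:initial-equiv}; the case $|I|=1$ is trivial; and the inductive step for existence is supplied by \cref{the:coproducts}. The nontrivial content is the \emph{binary descent} statement: for a disjoint coproduct $A\xrightarrow{i} A+B \xleftarrow{j} B$ in $\C$, the pullback functor $(i^*, j^*) : \C_{/A+B} \to \C_{/A} \times \C_{/B}$ is an equivalence of $\infty$-categories.

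To prove binary descent I would exhibit the candidate inverse $\Psi(u,v) = i_! u + j_! v$, where the coproduct is formed in $\C_{/A+B}$ (equivalently, in $\C$ with induced structure map to $A+B$), and verify both composites are equivalences. For $(i^*, j^*)\circ\Psi$: since $i^*$ is a left adjoint to $i_*$, it preserves coproducts, so $i^*(i_! u + j_! v) \simeq i^* i_! u + i^* j_! v$. By \cref{lem:emb-shriek-corefl}(\ref{lem:emb-shriek-corefl-corefl}) we have $i^* i_! u \simeq u$ since $i$ is a monomorphism. Applying the Beck--Chevalley condition (\cref{lem:BC}) to the disjoint pullback square with corners $0, A, B, A+B$ gives $i^* j_! v \simeq p_! q^* v$, where $p:0\to A$ and $q:0\to B$; strictness of the initial object (\cref{thm:initial-equiv}) makes $q^* v$ the unique object of $\C_{/0}$, so $i^* j_! v \simeq 0_A$, the initial object of $\C_{/A}$. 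Since $u+0_A \simeq u$, we conclude $i^*\Psi(u,v) \simeq u$, and symmetrically $j^*\Psi(u,v)\simeq v$.

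For $\Psi\circ(i^*, j^*)$ I would exploit universality of coproducts in the locally Cartesian closed $\infty$-category $\C_{/A+B}$. The key observation is that $i_! 1_A + j_! 1_B \simeq \id_{A+B}$ in $\C_{/A+B}$, i.e.\ the injections $i, j$ assemble into the identity on $A+B$. Given $x:X\to A+B$, apply the pullback functor $x^*$ (a left adjoint to $x_*$, hence colimit-preserving) to this decomposition to obtain $\id_X \simeq \bar i + \bar j$ in $\C_{/X}$, where $\bar i:i^* X\to X$ and $\bar j:j^* X\to X$ are the pullbacks of $i, j$ along $x$. Now apply the left adjoint $x_!$ (postcomposition with $x$), which preserves coproducts; using commutativity of the defining pullback squares to identify $x_!\bar i \simeq i_! i^* X$ and $x_!\bar j\simeq j_! j^* X$ in $\C_{/A+B}$, we obtain $X \simeq i_! i^* X + j_! j^* X$, as required.

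The general statement then follows by induction: writing $I = J\sqcup\{k_0\}$, we have
\[
\C_{/\coprod_{k\in I}A_k}\;\simeq\;\C_{/\coprod_{k\in J}A_k}\times\C_{/A_{k_0}}\;\simeq\;\prod_{k\in I}\C_{/A_k},
\]
with each equivalence realized by pullback along the relevant injections, so the composite is $(i_k^*)_{k\in I}$. The main conceptual step is the binary descent argument, which crucially combines local Cartesian closure (yielding universality of coproducts and the coproduct-preservation properties of pullback functors), disjointness (from \cref{the:coproducts}), and strictness of the initial object (from \cref{cor:omega-lccc-has-init}); the induction itself is routine bookkeeping.
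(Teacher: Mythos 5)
Your proof is correct and follows essentially the same route as the paper: reduce to the empty and binary cases and combine the strict initial object (\cref{cor:omega-lccc-has-init}), disjoint binary coproducts (\cref{the:coproducts}), and universality coming from local Cartesian closure. The only difference is one of detail: the paper treats the implication ``universal and disjoint coproducts yield the slice equivalence'' as a citable instance of Rezk's descent condition, whereas you verify it explicitly by exhibiting the adjunction $\Psi\dashv(i^*,j^*)$ with $\Psi(u,v)=i_!u+j_!v$ and checking that unit and counit are equivalences via Beck--Chevalley, the coreflection $i_!\dashv i^*$ for monomorphisms, and strictness of the initial object --- a correct and self-contained filling-in of the step the paper leaves to the literature.
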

	
	\begin{proof}
		If $I$ is empty, then this is precisely the statement that the initial object exists and is strict (\cref{cor:omega-lccc-has-init}). For $I$ non-empty, this is a direct consequence of the fact that coproducts exists and are disjoint (\cref{the:coproducts}) and universal, as $\C$ is \lcc  and left adjoints preserve colimits \cite[Proposition 5.2.3.5]{lurie2009higher}.
	\end{proof}
	
	\section{Coproducts and Pushouts in an Elementary
		\texorpdfstring{$\infty$}{oo}-Topos}\label{sec:eitop}
	
	In this final section we apply our result to the theory of elementary
	$\infty$-toposes. Following \cite{rasekh2018EHT,shulman2017elementary} we
	consider the following definition.
	
	\begin{definition}
		An elementary $\infty$-topos  is a finitely complete and cocomplete \lcc
		$\infty$-category $\E$ with a subobject classifier and enough
		universes\footnote{Here a \emph{universe} is an arrow $p :
			\mathcal{U}_*\to\mathcal{U}$ such that for all objects $A$ of $\E$ the induced
			map $\Map(A,\mathcal{U})\to\Core(\slc{\E}{A})$ is a monomorphism, and the class
			of pullbacks of $p$ satisfies certain closure conditions. For details
			see~\cite{rasekh2018EHT}.}.
	\end{definition}
	
	\cref{cor:descent coproducts} immediately gives us the following.
	
	\begin{corollary}
		An $\infty$-category $\E$ is an elementary $\infty$-topos if and only if it is
		\lcc  and has coequalizers, a subobject classifier, and
		enough universes.
	\end{corollary}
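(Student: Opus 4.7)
The forward implication is immediate: an elementary $\infty$-topos is by definition finitely cocomplete, hence in particular has coequalizers, while the \lcc, subobject classifier, and universe hypotheses are identical in the two formulations. All the content therefore lies in the converse.

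For the converse the plan is to derive finite cocompleteness from the combination of \cref{cor:descent coproducts} and the coequalizer hypothesis; finite completeness is already subsumed in the \lcc assumption. Concretely, suppose $\E$ is \lcc and has a subobject classifier, enough universes, and coequalizers. Then \cref{cor:descent coproducts} applies and supplies all finite coproducts, including a strict initial object. Combined with coequalizers, this produces pushouts by the standard construction: given a span $B \leftarrow A \to C$, form the coproduct $B+C$ with its two injections and take the pushout to be the coequalizer of the induced parallel pair $A \rightrightarrows B+C$. The universal property is verified directly from those of the coproduct and the coequalizer, and the argument is insensitive to the passage from $1$-categories to $\infty$-categories since it only manipulates mapping spaces via universal properties.

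Having pushouts and an initial object, $\E$ admits all finite colimits by the standard $\infty$-categorical reduction of finite colimits to pushouts and the initial object (\cf \lucite{\S 4.4}). Hence $\E$ is finitely cocomplete, and together with the remaining axioms this identifies it as an elementary $\infty$-topos in the sense of the preceding definition.

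No genuine obstacle arises: the substantive work has already been carried out in \cref{cor:descent coproducts}, and the remainder is the routine $\infty$-categorical recasting of a familiar $1$-categorical construction. The only step that merits a brief check is the verification that the coequalizer-of-$A\rightrightarrows B+C$ construction still computes a pushout in an $\infty$-category, and this follows directly from the relevant mapping-space universal properties.
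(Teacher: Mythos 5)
Your proposal is correct and follows the same route the paper intends: the paper states the corollary as an immediate consequence of \cref{cor:descent coproducts}, the implicit argument being exactly your reduction (coproducts from the main theorem, pushouts as coequalizers of $A\rightrightarrows B+C$, then all finite colimits from pushouts and the initial object via \lucite{4.4.2}). Your verification that the coequalizer construction of pushouts survives the passage to $\infty$-categories via mapping-space universal properties is the right point to check, and it goes through as you say.
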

	
	This result moves us closer to the modern definition of elementary toposes, with
	the main difference being that we still assume the existence of coequalizers.
	The final question is whether we can construct coequalizers from the remaining
	axioms. 
	
	The following example shows that a subobject classifier certainly does not suffice to construct pushouts.
	
	\begin{example} \label{ex:counter} Let $\Str$ be the full subcategory of $\S$
		spanned by truncated spaces. Note that $\Str$ is \lcc and the discrete space
		$1\!+\!1$ is a subobject classifier. We claim that the diagram 
		\begin{equation}\label{eq:sphere-pushout}
			\begin{tikzcd}[row sep=0.5in, column sep=0.5in]
				1 & S^1 \arrow[r] \arrow[l] & 1
			\end{tikzcd}
		\end{equation}
		does not have a pushout in $\Str$. First, note that the pushout in $\S$ is just
		the $2$-sphere $S^2$. This implies that the $n$-truncation $\tau_{\leq n}S^2$ is
		the pushout of this diagram in the subcategory $\S^{\leq n}$ of $n$-truncated
		spaces. Now if~\eqref{eq:sphere-pushout} had a pushout $C$ in $\Str$ then the
		$n$-truncations of $C$ would \emph{also} be pushouts in $\S^{\leq n}$, which
		would imply that $\tau_{\le n} C \simeq \tau_{\le n}S^2 $ for all $n\geq 0$.
		This is impossible since $S^2$ is not truncated~\cite{gray1969sphere}.
	\end{example}
	
	\bibliographystyle{alpha}
	\bibliography{bib}
\end{document}